\newtheorem{theorem}{Theorem}[section]
\newtheorem{lemma}{Lemma}[section]
\newtheorem{corollary}{Corollary}[section]
\newtheorem{definition}{Definition}[section]
\newtheorem{example}{Example}[section]
\newtheorem{remark}{Remark}
\begin{document}
\begin{spacing}{1.1}
\begin{frontmatter}

\title{Two tuples of noncommutative Orlicz sequence spaces and some geometry properties\tnoteref{mytitlenote}}
\tnotetext[mytitlenote]{The research has been supported by Nature Science Foundation of Hebei Province (No. A2023404001)and Three Three Three Talent
Project funded project of Hebei Province 2023 (No. C20231019).}
%% Group authors per affiliation:
%\author{Elsevier\fnref{myfootnote}}
%\address{Radarweg 29, Amsterdam}
%\fntext[myfootnote]{Since 1880.}

%% or include affiliations in footnotes:

\author[mymainaddress]{Ma Zhenhua  \corref{Ma Zhenhua}}
\cortext[Ma Zhenhua]{Corresponding author}
\ead{mazhenghua\_1981@163.com}

\author[mysecondaddress]{Jiang Lining}
\cortext[Jiang Lining]{}
\ead{jainglining@bit.edu.cn}

\address[mymainaddress]{Hebei University of Architecture, Zhangjiakou, 075024, P. R. China}
\address[mysecondaddress]{Beijing Institute of Technology, Beijing, 100081, P. R. China}

\begin{abstract}
The primary contribution of this study lies in proposing  a new concept termed $2$-tuples of noncommutative Orlicz sequence spaces $\bigoplus\limits_{j=1}^{2}S_{\varphi_{j},p}$, where $S_{\varphi_{j}}$ denotes a noncommutative Orlicz sequence space. By leveraging the three-line theorem, we establish the Riesz-Thorin interpolation theorem for $\bigoplus\limits_{j=1}^{2}S_{\varphi_{j},p}$. As applications, we derive bound for the nonsquare and von Neumann-Jordan constant of noncommutative Orlicz space $S_{\varphi_{s}} (0<s\leq1)$, where $\varphi_{s}$ is an intermediate function.
\end{abstract}

\begin{keyword}
compact operator \sep Orlicz function\sep Riesz-Thorin interpolation\sep von Neumann Jordan constant
\MSC[2010] 46L52\sep 47L10\sep46A80
\end{keyword}

\end{frontmatter}

\section{Preliminaries}
Interpolation methods constitute fundamental tools in modern mathematical analysis. Clarkson introduced a pivotal inequality—now known as Clarkson's inequality to study the uniform convexity of $L_{p}$ spaces. Building on this foundation, the noncommutative Riesz-Thorin interpolation theorem was later employed to derive Clarkson-type inequalities for $L_{p}$ spaces\cite{Clarkson}. In \cite{Xu},  the author used the noncommutative Riesz-Thorin interpolation theorem to get the Clarkson inequality of noncommutative $L_{p}$ spaces.

The principal objective of this paper is to investigate Riesz-Thorin interpolation theorem on the $2$-tuples noncommutative Orlicz sequence spaces $\bigoplus\limits_{j=1}^{2}S_{\varphi_{j},p}$, which yields the Clarkson inequality of noncommutative Orlicz sequence spaces $S_{\varphi}$. As an application, the bound of von Neumann Jordan constant of noncommutative Orlicz sequence spaces $S_{\varphi_{s}} (0<s\leq1)$ is given.

Let $\mathcal{B}(\mathcal{H})$ denote the algebra of all bounded linear operators in a complex, separable Hilbert space $\mathcal{H}$. If $T$ in $\mathcal{B}(\mathcal{H})$ is a compact operator and $T^{*}$ is its adjoint
$T^{*}T: \mathcal{H}\rightarrow\mathcal{H}$ is positive and compact, and so is $T^{*}T$ has a unique positive square root denoted by $|T| = (T^{*}T)^{1/2}$.  Correspondingly,
$\mathcal{H}$ has an orthonormal basis $\{e_{k}\}_{k}$, each of which is an eigenvector of $|T|$.

The eigenvalue $\lambda_{k}(|T|)$  corresponding to  each  eigenvector $e_{k}$ is necessarily non-negative, and it is  called a singular value of  the operator $T$, which is denoted by $s_{k}(T)$.
Without loss of generality, by reordering the members of the orthonormal basis $\{e_{k}\}$ if needed, we assume $s_{k}(T)$ are arranged in decreasing order, that is, $s_{k+1}(T) \leq s_{k}(T)$ for $k=1,2,\cdots.$

If $\sum\limits^{\infty}_{k=1}s_{k}(T)<\infty,$ the  operator $T:\mathcal{H}\rightarrow \mathcal{H}$ is called a trace class operator. The set of all trace class operators is denoted by $S_{1}(\mathcal{H})$. It can be shown that $S_{1}(\mathcal{H})$ is a Banach space in which the trace norm $\|\cdot\|_1$ is given by $\|T\|_{1}=\sum^{\infty}\limits_{k=1}s_{k}(T)$. For more details of $S_{1}(\mathcal{H})$ please refer to \cite{GK,Murphy}.

The  Schatten classes $S_{p}(\mathcal{H})$ is given as follows:
for any $0<p<\infty$, let $T$ be a compact operator of $\mathcal{H}$, and we define
$$S_{p}(\mathcal{H})=\left\{T: \ {\rm tr}\left(|T|^{p}\right)=\sum\limits_{n\geq1}s_{n}\left(T^{p}\right)=\sum\limits_{n\geq1}\left(s_{n}\left(T\right)\right)^{p}<\infty\right\}.$$
For any $T\in S_{p}(\mathcal{H})$, the norm is
$$\|T\|_{p}=\left(\sum\limits_{n\geq1}\left(s_{n}\left(T\right)\right)^{p}\right)^{\frac{1}{p}}.$$
Especially, $S_{2}(\mathcal{H})$ is called Hilbert-Schmidt operator space \cite{Bridges}, and the norm is $$\|T\|_{2}=\left(\sum\limits_{n\geq1}\left(s_{n}\left(T\right)\right)^{2}\right)^{\frac{1}{2}}.$$ Literatures on the Schatten class operators and their applications are very rich, please see the very interesting recent papers (\cite{Bo,WKG,MI1,MI2,SVD})
and references cited therein. Especially,  Gil',M.I. extended  some
useful results on determinants of Schatten–von Neumann operators to the operators with the Orlicz type norms and modular function, respectively (\cite{MI2,MI4}).

A convex function $\varphi: [0,\infty)\rightarrow[0,\infty]$ is called an Orlicz function
if it is nondecreasing and continuous for $\alpha\geq0$ and satisfying $\varphi(0)=0,\,\varphi(\alpha)>0$ and $\varphi(\alpha)\rightarrow\infty$ as $\alpha\rightarrow\infty$. The Orlicz function $\psi$ completementary to $\varphi$ can be defined by $\psi(y)=\sup\{x|y|-\varphi(x): x\geq0, y\in \mathbf{R}\}$.
Also $\varphi,\psi$ can be given by
$$\varphi(x)=\int_{0}^{|x|}p(t)dt, \psi(y)=\int_{0}^{|y|}q(s)ds.$$
A pair of complementary Orlicz functions satisfy the Young inequality $|xy|\leq \varphi(x)+\psi(y)$  iff $x=q(|y|){\rm sign}\ y$ or $y=p(|x|){\rm sign}\ x$.

Let $\varphi$  be an Orlicz function and  $l^{0}$ be the set of all real sequences. We define the Orlicz sequence space as follows
$$l_{\varphi}=\left\{x\in l^{0}: \rho_{\varphi}(\lambda x)=\sum^{\infty}_{i=1}\varphi\left(\lambda x(i)\right)<\infty,  \exists \lambda>0\right\}.$$
It is well known that $l_{\varphi}$ is equipped with so called the Luxemburg norm and Orlicz norm respectively
$$\|x\|_{(\varphi)}=\inf\left\{\lambda>0: \rho_{\varphi}\left( \frac{x}{\lambda}\right)\leq1\right\},$$
$$\|x\|_{\varphi}=\sup\left\{\sum^{\infty}_{i=1}x(i)y(i): \rho_{\psi}(y)\leq1\right\}.$$
Further we say an Orlicz function $\varphi$ satisfies condition $\delta_{2}$ near zero, shortly $\varphi\in\delta_{2}(0)$, if there exist  constant $u_{0}>0$  and $k>2$ such that $\varphi(2u)\leq k\varphi(u)$ for all $|u|\leq u_{0}$. For the background of Orlicz function and Orlicz space please see \cite{Chen,Rao}.

Now, if $\varphi$ is an Orlicz function and $T$ a compact operator of $\mathcal{H}$, by functional calculus, one has $${\rm tr}(\varphi(T))= \sum\limits^{\infty}_{k=1}\langle \varphi(T)e_{k},e_{k}\rangle=\sum\limits_{n\geq1}\varphi(s_{n}(T))=\sum\limits_{n\geq1}s_{n}(\varphi (T)).$$ Hence, the following definition of the noncommutative Orlicz sequence spaces could be given:
\begin{definition}\cite{Ma}
If $T\in \mathcal{B}(\mathcal{H})$ is a compact operator, we define the noncommutative Orlicz sequence spaces $S_{\varphi}(\mathcal{H})$ as follows:
$$S_{\varphi}(\mathcal{H})=\left\{T: {\rm tr}\left(\varphi(\lambda T)\right)=\sum_{n\geq1}\varphi(\lambda s_{n}(T))<\infty ,  \exists\lambda>0\right\},$$
where  $\rm {tr}$ is the usual trace functional of $\mathcal{B}(\mathcal{H})$.
\end{definition}

For convenience, we use $S_{\varphi}$ to denote $S_{\varphi}(\mathcal{H})$, which could be equipped with the Luxemburg norm as
\begin{eqnarray*}
\|T\|_{(\varphi)}&=&\inf\left\{\lambda>0, {\rm tr}\left(\varphi\left( \frac{T}{\lambda}\right)\right)\leq1\right\}\\
&=& \inf\left\{\lambda>0, \sum_{n\geq1}
\varphi\left(\frac{s_{n}\left(T\right)}{\lambda}\right)\leq1\right\}.
\end{eqnarray*}
 Also, one can define the following norm which is named as Orlicz norm
$$\|T\|_{\varphi}=\sup\left\{{\rm tr}(|TB|):  {\rm tr}(\psi(B))\leq1\right\},$$
where $\psi$ is the complementary function of $\varphi$. It is easy to know $(S_{\varphi}, \|T\|_{(\varphi)})$ and $(S_{\varphi}, \|T\|_{\varphi})$ are Banach spaces. Also one has the following relation of these  norms
$$\|T\|_{(\varphi)}<\|T\|_{\varphi}\leq2\|T\|_{(\varphi)}.$$

\begin{remark}
In the case $\varphi(T)=|T|^{p},\,\,1\leq p<\infty$ for any compact operator $T\in \mathcal{B}(\mathcal{H})$,  $S_{\varphi}$ is nothing but the Schatten
classes $S_{p}$ and the Luxemburg norm is
$$\|T\|_{p}={\rm tr}(|T|^{p})=\sum_{n\geq1}\left(( s_{n}(T))^{p}\right)^{\frac{1}{p}}.$$
Especially, if $p=1$, $S_{\varphi}=S_{1}$ and if $p=2$, $S_{\varphi}$ is the Hilbert-Schmidt operator space $S_{2}$ which satisfies $\varphi\in\delta_{2}(0)$.
\end{remark}
For more information on the theory of noncommutative Orlicz spaces we refer the reader to \cite{Rashed,Rashed1,Ghadir,Kunze,Zhenhua,Muratov}.

\section{Two tuples of Noncommutative Orlicz Sequence Spaces}
In this section, we will present a new concept $2$-tuples of noncommutative Orlicz spaces and  give some norm inequalities. In order to get the Riesz-Thorin interpolation theorem,  equivalent definition of Luxmburg norm must be given. As a corollary, the Clarkson inequality of noncommutive $S_{p}$ space could be get. The main ideas of proof in this paper are derived from literatures \cite{Xu} and \cite{Rao}.

Let $\mathcal{N}=B(\mathcal{H})\oplus B(\mathcal{H})$ be the $2$-th bounded linear operators direct sum of $\mathcal{H}$, then $\mathcal{N}$ acts on the direct sum Hilbert space $\mathcal{H}\oplus\mathcal{H}$ coordinatewise: $$Tx=(T_{1}, T_{2})(x_{1},x_{2})=\sum\limits^{2}_{j=1}T_{j}x_{j},$$ where $T_{j}\in B(\mathcal{H}), x_{j}\in \mathcal{H}$ and $j=1,2.$ Then $\mathcal{N}_{+}=B(\mathcal{H})_{+}\oplus B(\mathcal{H})_{+}$.

We define $\upsilon:\mathcal{N}_{+}\rightarrow \mathds{C}$ by $\upsilon(T)=\sum\limits^{2}_{j=1}{\rm tr}(T_{j})={\rm tr}\left(
                                                                      \begin{array}{cc}
                                                                        T_{1} & 0 \\
                                                                        0 & T_{2} \\
                                                                      \end{array}
                                                                    \right)
,$ where  $\rm{tr}$ is  normal semifinite faithful trace on $B(\mathcal{H})$, then $\upsilon$ is  normal faithful normal trace on $\mathcal{N}$.
\begin{definition}
Let $\varphi=(\varphi_{1},\varphi_{2})$ be $2$-tuples of N function. For each $p\geq1$, consider the following direct sum space: $$\bigoplus\limits_{j=1}^{2}S_{\varphi_{j},p}
=\{T=(T_{1},T_{2}): T_{j}\in S_{\varphi_{j}}(\mathcal{H}), j=1,2\}$$
with norm $\|\cdot\|_{(\varphi),p}$  defined as follows:
\begin{equation}
\|T\|_{(\varphi),p}=
\begin{cases} \left[\sum\limits_{j=1}^{2}\|T_{j}\|_{(\varphi_{j})}^{p}\right]^{\frac{1}{p}}, &1\leq p<\infty, \nonumber\\
\max\limits_{j}\|T_{j}\|_{(\varphi_{j})}, &p=\infty,
\end{cases}
\end{equation}
or the norm $\|\cdot\|_{\psi,p}$ defined in the same way as before which $\|\cdot\|_{\left(\varphi_{j}\right)}$ is replaced by the Orlicz norm $\|\cdot\|_{\psi_{j}}$. If $\psi_{j}$ is the complementary N-function of $\varphi_{j}$, denoted by $\bigoplus\limits_{j=1}^{2}S_{\psi_{j},q}$ which is equipped with $\|\cdot\|_{\psi,q}$ for $q=\frac{p}{p-1}$.
\end{definition}
\begin{remark}
If $T_{2}=0$, then $\bigoplus\limits_{j=1}^{2}S_{\varphi_{j},p}=S_{\varphi_{1}}.$
\end{remark}
\begin{theorem}
Assume $T\in \bigoplus\limits_{j=1}^{2}S_{\varphi_{j},p}$ and $B\in \bigoplus\limits_{j=1}^{2}S_{\psi_{j},q}$, where $1\leq p<\infty$ and $\frac{1}{p}+\frac{1}{q}=1$, one has

$(1)$ If $\|T_{j}\|_{(\varphi_{j})}\leq1$, then
$\upsilon(\varphi(T))={\rm tr}\left(
                                                                      \begin{array}{cc}
                                                                        \varphi_{1}(T_{1}) & 0 \\
                                                                        0 & \varphi_{2}(T_{2}) \\
                                                                      \end{array}
                                                                    \right)
\leq 2^{\frac{1}{q}}\cdot \|T\|_{(\varphi),p}$.

$(2)$ If $\|T_{j}\|_{(\varphi_{j})}>1$, then  $\upsilon(\varphi(T))={\rm tr}\left(
                                                                      \begin{array}{cc}
                                                                        \varphi_{1}(T_{1}) & 0 \\
                                                                        0 & \varphi_{2}(T_{2}) \\
                                                                      \end{array}
                                                                    \right)
>\|T\|_{(\varphi),p}$.

$(3)$\rm{(H$\ddot{\rm{o}}$lder inequality)}
$\upsilon(|TB|)={\rm tr}\left(
                                                                      \begin{array}{cc}
                                                                        T_{1}B_{1} & 0 \\
                                                                        0 & T_{2}B_{2} \\
                                                                      \end{array}
                                                                   \right)\leq\|T\|_{\varphi,p}\cdot\|B\|_{(\psi),q}$
or $\upsilon(|TB|)\leq\|T\|_{(\varphi),p}\cdot\|B\|_{\psi,q}$.

$(4)$ If $\varphi\in \delta(0)$, then there exists $k>0$, such that
$$\upsilon(\varphi(|T+B|))
\leq\frac{k}{2}\upsilon((\varphi(|T|)+\varphi(|B|)).$$

$(5)$ If $\|T\|_{(\varphi),p}\neq0$, then $$\upsilon\left(\varphi\left(\frac{T}{\|T\|_{(\varphi),p}}\right)\right)\leq 2^{1-\frac{1}{p}}.$$
\end{theorem}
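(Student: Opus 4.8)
\medskip
The plan is to deduce each of the five assertions from the corresponding one-component fact in a single noncommutative Orlicz sequence space $S_{\varphi_{j}}$, using that $\upsilon$ splits as a sum, $\upsilon\big(\varphi(T)\big)=\mathrm{tr}\big(\varphi_{1}(T_{1})\big)+\mathrm{tr}\big(\varphi_{2}(T_{2})\big)$, and then combining the two components through the elementary inequalities between the $\ell^{1}$, $\ell^{p}$ and $\ell^{q}$ norms on $\mathbb{R}^{2}$. The component-level inputs, all standard consequences of the convexity of $\varphi_{j}$ together with $\varphi_{j}(0)=0$ (so $\varphi_{j}(\theta u)\le\theta\varphi_{j}(u)$ for $0\le\theta\le1$ and $\varphi_{j}(\theta u)\ge\theta\varphi_{j}(u)$ for $\theta\ge1$), are: (a) if $\|S\|_{(\varphi_{j})}\le1$ then $\mathrm{tr}\big(\varphi_{j}(S)\big)\le\|S\|_{(\varphi_{j})}$; (b) if $\|S\|_{(\varphi_{j})}>1$ then $\mathrm{tr}\big(\varphi_{j}(S)\big)\ge\|S\|_{(\varphi_{j})}$, with strict inequality in the nondegenerate case; and (c) the Hölder inequality $\mathrm{tr}\big(|SR|\big)\le\|S\|_{\varphi_{j}}\,\|R\|_{(\psi_{j})}$, which is immediate from the definition of the Orlicz norm $\|\cdot\|_{\varphi_{j}}$.

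With these in hand I would argue as follows. For $(1)$, input (a) gives $\upsilon\big(\varphi(T)\big)\le\sum_{j}\|T_{j}\|_{(\varphi_{j})}$, and then Hölder applied to the pair $\big(\|T_{1}\|_{(\varphi_{1})},\|T_{2}\|_{(\varphi_{2})}\big)$ against the vector $(1,1)$ gives $\sum_{j}\|T_{j}\|_{(\varphi_{j})}\le2^{1/q}\big(\sum_{j}\|T_{j}\|_{(\varphi_{j})}^{p}\big)^{1/p}=2^{1/q}\|T\|_{(\varphi),p}$. For $(2)$, input (b) applied in each component gives $\upsilon\big(\varphi(T)\big)>\sum_{j}\|T_{j}\|_{(\varphi_{j})}$, and since $p\ge1$ the $\ell^{1}$-norm dominates the $\ell^{p}$-norm, so $\sum_{j}\|T_{j}\|_{(\varphi_{j})}\ge\|T\|_{(\varphi),p}$. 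For $(3)$, input (c) in each slot followed by Hölder for sequences yields $\upsilon\big(|TB|\big)=\sum_{j}\mathrm{tr}\big(|T_{j}B_{j}|\big)\le\sum_{j}\|T_{j}\|_{\varphi_{j}}\|B_{j}\|_{(\psi_{j})}\le\big(\sum_{j}\|T_{j}\|_{\varphi_{j}}^{p}\big)^{1/p}\big(\sum_{j}\|B_{j}\|_{(\psi_{j})}^{q}\big)^{1/q}=\|T\|_{\varphi,p}\|B\|_{(\psi),q}$, and interchanging the roles of the Orlicz and Luxemburg norms gives the second form. For $(5)$, set $\Lambda=\|T\|_{(\varphi),p}$; since $\|T_{j}\|_{(\varphi_{j})}\le\Lambda$ we have $\|T_{j}/\Lambda\|_{(\varphi_{j})}\le1$, so (a) together with the same bound from $(1)$ gives $\upsilon\big(\varphi(T/\Lambda)\big)\le\Lambda^{-1}\sum_{j}\|T_{j}\|_{(\varphi_{j})}\le\Lambda^{-1}\cdot2^{1/q}\Lambda=2^{1-1/p}$.

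The only part needing genuine operator theory, and the one I expect to be the main obstacle, is $(4)$. A naive triangle inequality for $|\cdot|$ is unavailable for noncommuting operators, so instead I would use the triangle inequality for the Ky Fan norms, $\sum_{n=1}^{N}s_{n}(T_{j}+B_{j})\le\sum_{n=1}^{N}\big(s_{n}(T_{j})+s_{n}(B_{j})\big)$ for every $N$; thus $\big(s_{n}(T_{j}+B_{j})\big)_{n}$ is weakly majorized by the already nonincreasing sequence $\big(s_{n}(T_{j})+s_{n}(B_{j})\big)_{n}$, and since $\varphi_{j}$ is convex, nondecreasing and vanishes at $0$, weak majorization is preserved, giving $\mathrm{tr}\big(\varphi_{j}(|T_{j}+B_{j}|)\big)=\sum_{n}\varphi_{j}\big(s_{n}(T_{j}+B_{j})\big)\le\sum_{n}\varphi_{j}\big(s_{n}(T_{j})+s_{n}(B_{j})\big)$. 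Convexity together with $\varphi_{j}\in\delta_{2}(0)$ then yields the pointwise bound $\varphi_{j}(a+b)\le\tfrac12\varphi_{j}(2a)+\tfrac12\varphi_{j}(2b)\le\tfrac{k_{j}}{2}\big(\varphi_{j}(a)+\varphi_{j}(b)\big)$, and summing over $n$ and over the two slots with $k=\max\{k_{1},k_{2}\}$ gives the claim. The delicate point is precisely that $\delta_{2}$ holds only near $0$: the pointwise estimate must be checked separately for the finitely many large singular values of $T_{j}$ and $B_{j}$, which form a bounded set, so one rescales $T,B$ first (or enlarges $k$) in order to apply the $\delta_{2}$ bound everywhere. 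Establishing the weak-majorization step cleanly and handling this $\delta_{2}$-threshold bookkeeping is where essentially all the work lies; the remaining parts reduce to two-term $\ell^{p}$ inequalities.
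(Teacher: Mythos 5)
Your arguments for (1), (2), (3) and (5) are correct and essentially the same as the paper's: the paper likewise invokes the single-space facts componentwise (your inputs (a)--(c) are exactly Proposition 3.4 and Theorem 3.3 of \cite{Ghadir} together with Theorem 8 of \cite{Rao}) and then finishes with the two-term estimates $a+b\le 2^{1/q}(a^{p}+b^{p})^{1/p}$ and $(a^{p}+b^{p})^{1/p}\le a+b$; for (5) the paper phrases the last step as maximizing $f(x,y)=(x+y)^{p}/(x^{p}+y^{p})$, which is the same inequality you use. The genuine divergence is in (4). The paper argues at the operator level: it uses the decomposition $|T_{j}+B_{j}|\le U_{j}|T_{j}|U_{j}^{*}+V_{j}|B_{j}|V_{j}^{*}$ with partial isometries $U_{j},V_{j}$, then applies convexity of $\varphi_{j}$ and the $\delta_{2}$ constant to traces of operator functions, and finishes with $s_{n}(U_{j}|T_{j}|U_{j}^{*})\le s_{n}(|T_{j}|)$. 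You instead argue at the level of singular values: Ky Fan's inequality gives weak majorization of $\big(s_{n}(T_{j}+B_{j})\big)_{n}$ by $\big(s_{n}(T_{j})+s_{n}(B_{j})\big)_{n}$, this is preserved by the nondecreasing convex $\varphi_{j}$ with $\varphi_{j}(0)=0$, and the scalar bound $\varphi_{j}(a+b)\le\frac{k_{j}}{2}\big(\varphi_{j}(a)+\varphi_{j}(b)\big)$ then sums up. Both routes are legitimate; yours is more elementary in the compact-operator setting (it avoids the partial-isometry decomposition and the monotonicity/convexity of $X\mapsto{\rm tr}\,\varphi_{j}(X)$), while the paper's mirrors the noncommutative $L_{p}$ literature it cites. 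Finally, your caveat that $\delta_{2}(0)$ only controls small arguments, so the finitely many large singular values need a separate (rescaling or enlarged-constant) treatment, is a genuine issue that the paper's proof simply glosses over by applying the $\delta_{2}$ constant without restriction; on that point your write-up is the more careful of the two.
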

\begin{proof}
$(1)$ If $\|T_{j}\|_{(\varphi_{j})}\leq1$,  by Proposition 3.4 of \cite{Ghadir} and classical H$\rm\ddot{o}$lder inequality, then we have
\begin{eqnarray*}
\upsilon(\varphi(T))&=&\sum\limits^{2}_{j=1}{\rm tr}(\varphi_{j}(T_{j}))\\
&=&\sum\limits^{2}_{j=1}1\cdot {\rm tr} (\varphi_{j}(T_{j}))\\
&\leq& \left(\sum\limits^{2}_{j=1}1^{q}\right)^{\frac{1}{q}}\cdot \left[\sum\limits^{2}_{j=1}\left({\rm tr} (\varphi_{j}(T_{j})\right)^{p}\right]^{\frac{1}{p}}\\
&\leq&2^{\frac{1}{q}}\cdot\left[\sum\limits^{2}_{j=1}\|T_{j}\|_{(\varphi_{j})}^{p}\right]^{\frac{1}{p}}\\
&=&2^{\frac{1}{q}}\cdot\|T\|_{(\varphi),p}.
\end{eqnarray*}

$(2)$ If $\|T_{j}\|_{(\varphi_{j})}>1$, by Proposition 3.4 of \cite{Ghadir}, we have
\begin{eqnarray*}
\left[\upsilon(\varphi(T))\right]^{p}&=&\left[\sum\limits^{2}_{j=1}{\rm tr}(\varphi_{j}(T_{j}))\right]^{p}\\
&>&\left[\sum\limits^{2}_{j=1}\|T_{j}\|_{(\varphi_{j})}\right]^{p}\\
&\geq&\sum\limits^{2}_{j=1}\|T_{j}\|^{p}_{(\varphi_{j})},
\end{eqnarray*}
which means that $$\upsilon(\varphi(T))>\left[\sum\limits^{2}_{j=1}\|T_{j}\|^{p}_{(\varphi_{j})}\right]^{\frac{1}{p}}=\|T\|_{(\varphi),p}.$$

$(3)$ By Theorem 3.3 of \cite{Ghadir} and  Theorem 8 of \cite{Rao}, one get that
\begin{eqnarray*}
\upsilon(|TB|)&=&\sum\limits^{2}_{j=1}\left|{\rm tr}(T_{j}B_{j})\right|\\
&\leq&\sum\limits^{2}_{j=1}\left(\|T_{j}\|_{\varphi_{j}}\cdot\|B_{j}\|_{(\psi_{j})}\right)\\
&\leq&\left(\sum\limits^{2}_{j=1}\|T_{j}\|^{p}_{\varphi_{j}}\right)^{\frac{1}{p}}
\cdot\left(\sum\limits^{2}_{j=1}\|B_{j}\|^{q}_{(\psi_{j})}\right)^{\frac{1}{q}}\\
&=&\|T\|_{\varphi,p}\cdot \|B\|_{(\psi),q}.
\end{eqnarray*}
The other inequality can be obtained similarly.

$(4)$ We know there exist partially isometric operators $U,V$ such that $|T+B|\leq U|T|U^{*}+V|B|V^{*}$, using the convexity of $\varphi$ and  $\varphi\in \delta(0)$, we can derive
\begin{eqnarray*}
\upsilon(\varphi(|T+B|))&=&\sum\limits^{2}_{j=1}{\rm tr}\left[\varphi_{j}(T_{j}+B_{j})\right]\\
&\leq& \sum\limits^{2}_{j=1}{\rm tr}\left(\varphi_{j}\left(U_{j}|T_{j}|U_{j}^{*}+V_{j}|B_{j}|V_{j}^{*}\right)\right)\\
&=&\sum\limits^{2}_{j=1}{\rm tr}\left(\varphi_{j}\left(2\frac{U_{j}|T_{j}|U_{j}^{*}+V_{j}|B_{j}|V_{j}^{*}}{2}\right)\right)\\
&\leq&\sum\limits^{2}_{j=1}\frac{k_{j}}{2}\left[{\rm tr}\left(\varphi_{j}\left(U_{j}|T_{j}|U_{j}^{*}+V_{j}|B_{j}|V_{j}^{*}\right)\right)\right]\\
&=&\sum\limits^{2}_{j=1}\left\{\frac{k_{j}}{2}\left[\sum_{n\geq1}\varphi_{j}\left(s_{n}(U_{j}|T_{j}|U_{j}^{*})\right)+\sum_{n\geq1}\varphi_{j}\left(s_{n}(V_{j}|B_{j}|V_{j}^{*})\right)\right]\right\}\\
&\leq&\sum\limits^{2}_{j=1}\left\{\frac{k_{j}}{2}\left[\sum_{n\geq1}\varphi_{j}\left(s_{n}(|T_{j}|)\right)+\sum_{n\geq1}\varphi_{j}\left(s_{n}(|B_{j}|)\right)\right]\right\}\\
&\leq&\frac{k}{2}\sum\limits^{2}_{j=1}{\rm tr}\left[\varphi_{j}(T_{j})+\varphi_{j}(B_{j})\right]\\
&=&\frac{k}{2}\left[\upsilon(\varphi(T))+\upsilon(\varphi(B))\right],
\end{eqnarray*}
where $k=\max\{k_{1},k_{2}\}$.

$(5)$ If $\|T\|_{(\varphi),p}\neq0$, then by (1) one has,
\begin{eqnarray*}
\upsilon\left(\varphi\left(\frac{T}{\|T\|_{(\varphi),p}}\right)\right)&=&{\rm tr}\left(
                                                                      \begin{array}{cc}
                                                                        \varphi_{1}\left(\frac{T_{1}}{\|T\|_{(\varphi),p}}\right) & 0 \\
                                                                        0 & \varphi_{2}\left(\frac{T_{2}}{\|T\|_{(\varphi),p}}\right) \\
                                                                      \end{array}
                                                                    \right)\\
&=&{\rm tr}\left(\varphi_{1}\left(\frac{T_{1}}{\|T\|_{(\varphi),p}}\right) \right)+{\rm tr}\left( \varphi_{2}\left(\frac{T_{2}}{\|T\|_{(\varphi),p}}\right) \right)\\
&\leq&\left\|\frac{T_{1}}{\|T\|_{(\varphi),p}}\right\|_{(\varphi_{1})}+\left\|\frac{T_{2}}{\|T\|_{(\varphi),p}}\right\|_{(\varphi_{2})}\\
&=&\frac{\|T_{1}\|_{(\varphi_{1})}}{\left(\|T\|^{p}_{(\varphi_{1})}+\|T\|^{p}_{(\varphi_{2})}\right)^{\frac{1}{p}}}+\frac{\|T_{2}\|_{(\varphi_{2})}}{\left(\|T\|^{p}_{(\varphi_{1})}+\|T\|^{p}_{(\varphi_{2})}\right)^{\frac{1}{p}}}\\
&\leq&2^{\frac{1}{q}}.
\end{eqnarray*}
In fact, we only need to consider the maximum value of the binary function $f(x,y)=\frac{(x+y)^{p}}{x^{p}+y^{p}}$, where $x,y\geq0$ and $p\geq1$.
\end{proof}
\begin{remark}
If $\varphi$ is 1-tuple of N-function , then Lemma 2.1 is exactly the Theorem 3.3 and Proposition 3.4 of \cite{Ghadir}.
\end{remark}
\begin{corollary}
$(1)$ $\upsilon\left(\frac{T^{p}}{\|T\|^{p}_{p,p}}\right)=1$, where $p\geq1.$
\begin{proof}
In (5) of theorem 2.1, if we let $\varphi_{1}(x)=\varphi_{2}(x)=|x|^{p}$, then
\begin{eqnarray*}
\upsilon\left(\frac{T^{p}}{\|T\|^{p}_{p,p}}\right)&=&{\rm tr}\left(
                                                                      \begin{array}{cc}
                                                                        \frac{\|T_{1}\|^{p}}{\|T_{1}\|^{p}+\|T_{2}\|^{p}} & 0 \\
                                                                        0 & \frac{\|T_{2}\|^{p}}{\|T_{1}\|^{p}+\|T_{2}\|^{p}} \\
                                                                      \end{array}
                                                                    \right)\\
&=&\frac{1}{\|T_{1}\|^{p}+\|T_{2}\|^{p}}\sum_{n\geq1}\left[s_{n}^{p}(T_{1})+s_{n}^{p}(T_{2})\right]\\
&=&\frac{\|T_{1}\|^{p}+\|T_{2}\|^{p}}{\|T_{1}\|^{p}+\|T_{2}\|^{p}}\\
&=&1.
\end{eqnarray*}
\end{proof}
$(2)$ If $T,B\in \bigoplus\limits_{j=1}^{2}S_{p,p}$ and $p\geq1$, then
$$\upsilon\left(|T+B|^{p}\right)\leq 2^{p-1}\sum_{n\geq1}\left[s_{n}^{p}(T)+s_{n}^{p}(B)\right].$$
\begin{proof}
In (4) of theorem 2.1, if we let $\varphi_{1}(x)=\varphi_{2}(x)=|x|^{p}$, then $k_{1}=k_{2}=2^{p}$ and
\begin{eqnarray*}
\upsilon\left(|T+B|^{p}\right)&=&{\rm tr}\left[(T_{1}+B_{1})^{p}+(T_{2}+B_{2})^{p}\right]\\
&=&\sum_{n\geq1}\left[s_{n}^{p}(T_{1}+B_{1})+s_{n}^{p}(T_{2}+B_{2})\right]\\
&\leq&\frac{2^{p}}{2}\left[\sum_{n\geq1}\left[s_{n}^{p}(T_{1})+s_{n}^{p}(T_{2})\right]+\sum_{n\geq1}\left[s_{n}^{p}(B_{1})+s_{n}^{p}(B_{2})\right]\right]\\
&=&2^{p-1}\sum_{n\geq1}\left[s_{n}^{p}(T)+s_{n}^{p}(B)\right],
\end{eqnarray*}
which implies that
$$\|T+B\|_{p}^{p}\leq2^{p-1}\left(\|T\|_{p}^{p}+\|B\|_{p}^{p}\right).$$
\end{proof}
$(3)${\rm (H$\ddot{\rm{o}}$lder inequality of $\bigoplus\limits_{j=1}^{2}S_{p,p}$)}  If $T\in \bigoplus\limits_{j=1}^{2}S_{p,p}, B\in \bigoplus\limits_{j=1}^{2}S_{q,q}$, where $\frac{1}{p}+\frac{1}{q}=1$ with $p\geq1$, then we have
$$\upsilon(|TB|)\leq \|T\|_{p}\cdot \|B\|_{q}.$$
\begin{proof}
In (3) of theorem 2.1,  $\varphi_{1}(x)=\varphi_{2}(x)=|x|^{p}$, we can infer
\begin{eqnarray*}
\upsilon(|TB|)&=&\sum^{2}_{n=1}{\rm tr}(T_{j}B_{j})\\
&\leq&\sum^{2}_{n=1}\|T_{j}\|_{p}\|B_{j}\|_{q}\\
&\leq&\left(\sum^{2}_{n=1}\|T_{j}\|^{p}_{p}\right)^{\frac{1}{p}}\cdot\left(\sum^{2}_{n=1}\|B_{j}\|^{q}_{q}\right)^{\frac{1}{q}}\\
&=&\|T\|_{p}\cdot \|B\|_{q}.
\end{eqnarray*}
\end{proof}
\end{corollary}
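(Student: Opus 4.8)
The plan is to read off all three items from Theorem~2.1 by taking both coordinate functions to be the power function, i.e. $\varphi_{1}(x)=\varphi_{2}(x)=\varphi(x)=|x|^{p}$. For this $\varphi$ the Remark after Definition~1.1 identifies $S_{\varphi}$ with the Schatten class $S_{p}$ and the Luxemburg norm $\|\cdot\|_{(\varphi)}$ with $\|\cdot\|_{p}$, so that $\|T\|_{(\varphi),p}^{p}=\|T_{1}\|_{p}^{p}+\|T_{2}\|_{p}^{p}$. Applying functional calculus to the block-diagonal operator $\mathrm{diag}(T_{1},T_{2})$ and using ${\rm tr}(|T_{j}|^{p})=\sum_{n}s_{n}(T_{j})^{p}$, I record the basic identity $$\upsilon(\varphi(T))={\rm tr}\bigl(|T_{1}|^{p}\bigr)+{\rm tr}\bigl(|T_{2}|^{p}\bigr)=\|T_{1}\|_{p}^{p}+\|T_{2}\|_{p}^{p}=\|T\|_{(\varphi),p}^{p},$$ together with the elementary estimate $\varphi(2x)=2^{p}\varphi(x)$, which lets me invoke Theorem~2.1$(4)$ with $k_{1}=k_{2}=2^{p}$.

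Item $(1)$ is then immediate: assuming $T\neq 0$, the displayed identity gives $\upsilon\bigl(T^{p}/\|T\|_{p,p}^{p}\bigr)=(\|T_{1}\|_{p}^{p}+\|T_{2}\|_{p}^{p})/\|T\|_{p,p}^{p}=1$; equivalently, one may run Theorem~2.1$(5)$ with $\varphi=|x|^{p}$ and observe that the inequality $\leq 2^{1-1/p}$ there becomes an equality to $1$ in the Schatten case. For item $(2)$ I substitute $\varphi_{1}=\varphi_{2}=|x|^{p}$, $k=2^{p}$ into Theorem~2.1$(4)$, obtaining $\upsilon(\varphi(|T+B|))\leq \tfrac{2^{p}}{2}\bigl(\upsilon(\varphi(|T|))+\upsilon(\varphi(|B|))\bigr)$; rewriting each $\upsilon(\varphi(\cdot))$ as $\sum_{n}s_{n}^{p}(\cdot)$ of the corresponding block-diagonal operator turns the right-hand side into $2^{p-1}\sum_{n}\bigl[s_{n}^{p}(T)+s_{n}^{p}(B)\bigr]$, and reading this back through the norm definition yields the Clarkson-type bound $\|T+B\|_{p}^{p}\leq 2^{p-1}(\|T\|_{p}^{p}+\|B\|_{p}^{p})$.

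For item $(3)$, I apply Theorem~2.1$(3)$ with $\varphi_{1}=\varphi_{2}=|x|^{p}$, whose complementary function is $|y|^{q}$ with $1/p+1/q=1$ (up to the usual normalizing constant), so that the Orlicz and Luxemburg norms appearing there reduce to $\|\cdot\|_{p}$ on the $T$-factors and $\|\cdot\|_{q}$ on the $B$-factors. Combining the classical trace Hölder inequality $|{\rm tr}(T_{j}B_{j})|\leq\|T_{j}\|_{p}\|B_{j}\|_{q}$ (Theorem~8 of \cite{Rao}) with the two-term discrete Hölder inequality $\sum_{j=1}^{2}\|T_{j}\|_{p}\|B_{j}\|_{q}\leq\bigl(\sum_{j}\|T_{j}\|_{p}^{p}\bigr)^{1/p}\bigl(\sum_{j}\|B_{j}\|_{q}^{q}\bigr)^{1/q}$ then gives $\upsilon(|TB|)\leq\|T\|_{p}\|B\|_{q}$.

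I do not anticipate a genuine obstacle: each part is a direct substitution into Theorem~2.1 followed by bookkeeping. The only points requiring a little care are (a) checking that in the power-function case the Luxemburg norm, the Orlicz norm, and the complementary norms all collapse to the expected $\|\cdot\|_{p}$ and $\|\cdot\|_{q}$, paying attention to the normalization of the complementary function; (b) the functional-calculus identity ${\rm tr}(|T+B|^{p})=\sum_{n}s_{n}(T+B)^{p}$ for the block-diagonal operator on $\mathcal{H}\oplus\mathcal{H}$; and (c) the harmless edge case $p=1$, where $\varphi(x)=|x|$ only satisfies $\varphi(2x)=2\varphi(x)$ rather than the strict $\delta_{2}(0)$ condition, but Theorem~2.1$(4)$ still applies with $k=2$ and $(2)$ degenerates to the triangle inequality.
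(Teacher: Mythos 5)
Your proposal is correct and follows essentially the same route as the paper: each part is obtained by substituting $\varphi_{1}=\varphi_{2}=|x|^{p}$ into Theorem~2.1 (parts (5), (4) with $k_{1}=k_{2}=2^{p}$, and (3) respectively), identifying the Luxemburg norm with the Schatten norm $\|\cdot\|_{p}$, and finishing with the same trace computation, respectively the trace H\"older plus two-term discrete H\"older estimate. Your added remarks on the normalization of the complementary function and the $p=1$ edge case are harmless refinements of what the paper leaves implicit.
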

\begin{theorem}
$\bigoplus\limits_{j=1}^{2}S_{\varphi_{j},p}$ is a two sides ideal of $B(\mathcal{H})\bigoplus B(\mathcal{H})$, which means that for any $T\in \bigoplus\limits_{j=1}^{2}S_{\varphi_{j},p}$ and $B=(B_{1},B_{2}),C=(C_{1},C_{2})\in B(\mathcal{H})\bigoplus B(\mathcal{H})$, one has $\widetilde{B}T\widetilde{C}, \widetilde{C}T\widetilde{B}\in \bigoplus\limits_{j=1}^{2}S_{\varphi_{j},p}$ and $$\|\widetilde{B}T\widetilde{C}\|_{(\varphi),p}\leq \|\widetilde{B}\|_{\infty}\|T\|_{(\varphi),p}\|\widetilde{C}\|_{\infty},$$
where $\widetilde{B}=(\widetilde{B_{1}},\widetilde{B_{1}})$ with $\widetilde{B_{1}}=\{B_{j}: \max\{\|B_{j}\|_{\infty}\}, j=1,2\}$ and $\widetilde{C}=(\widetilde{C_{1}},\widetilde{C_{1}})$ with $\widetilde{C_{1}}=\{C_{j}: \max\{\|C_{j}\|_{\infty}\}, j=1,2\}$.
\end{theorem}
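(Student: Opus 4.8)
The plan is to reduce the two--tuple assertion to the one--coordinate fact that $S_{\varphi_j}$ is a two--sided ideal of $B(\mathcal{H})$, and to obtain the latter from the classical singular--value inequality
$$s_n(AXC)\le \|A\|_{\infty}\,\|C\|_{\infty}\,s_n(X),\qquad n\ge1,$$
valid for every compact $X$ and all bounded $A,C$ on $\mathcal{H}$ (see \cite{GK}; it follows by iterating $s_n(AY)\le\|A\|_{\infty}s_n(Y)$ and $s_n(YC)\le\|C\|_{\infty}s_n(Y)$, which come from the min--max description of singular values). First I would observe that $\widetilde{B}T\widetilde{C}=(\widetilde{B_1}T_1\widetilde{C_1},\,\widetilde{B_1}T_2\widetilde{C_1})$ is a pair of compact operators, since the product of a compact operator with bounded operators is compact; hence it is a well--defined element of $B(\mathcal{H})\oplus B(\mathcal{H})$, and only membership in the subspace together with the norm bound remains to be verified (recall $\|\widetilde{B}\|_{\infty}=\max\{\|B_1\|_{\infty},\|B_2\|_{\infty}\}$ by definition of the $\ell^{\infty}$--type norm on $\mathcal{N}$).

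Next, fix $j\in\{1,2\}$. If $\|\widetilde{B}\|_{\infty}\,\|T_j\|_{(\varphi_j)}\,\|\widetilde{C}\|_{\infty}=0$ the claim for that coordinate is trivial (one of the three operators is zero), so assume this product is positive and put $\lambda_j:=\|\widetilde{B}\|_{\infty}\,\|T_j\|_{(\varphi_j)}\,\|\widetilde{C}\|_{\infty}$. Combining the displayed singular--value inequality with the monotonicity of $\varphi_j$ gives
$$\sum_{n\ge1}\varphi_j\!\left(\frac{s_n(\widetilde{B_1}T_j\widetilde{C_1})}{\lambda_j}\right)\le\sum_{n\ge1}\varphi_j\!\left(\frac{s_n(T_j)}{\|T_j\|_{(\varphi_j)}}\right)\le1,$$
the last inequality being the standard modular--norm property of the Luxemburg norm (Proposition~3.4 of \cite{Ghadir}). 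By the very definition of $\|\cdot\|_{(\varphi_j)}$ this means $\widetilde{B_1}T_j\widetilde{C_1}\in S_{\varphi_j}$ with $\|\widetilde{B_1}T_j\widetilde{C_1}\|_{(\varphi_j)}\le\lambda_j=\|\widetilde{B}\|_{\infty}\,\|T_j\|_{(\varphi_j)}\,\|\widetilde{C}\|_{\infty}$.

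Finally I would assemble the two coordinates. For $1\le p<\infty$,
$$\|\widetilde{B}T\widetilde{C}\|_{(\varphi),p}=\Bigl(\sum_{j=1}^{2}\|\widetilde{B_1}T_j\widetilde{C_1}\|_{(\varphi_j)}^{p}\Bigr)^{1/p}\le\|\widetilde{B}\|_{\infty}\|\widetilde{C}\|_{\infty}\Bigl(\sum_{j=1}^{2}\|T_j\|_{(\varphi_j)}^{p}\Bigr)^{1/p}=\|\widetilde{B}\|_{\infty}\,\|T\|_{(\varphi),p}\,\|\widetilde{C}\|_{\infty},$$
and for $p=\infty$ one replaces the $\ell^{p}$--sum by $\max_{j}$ and argues identically; the statement for $\widetilde{C}T\widetilde{B}$ follows verbatim after interchanging $B$ and $C$. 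I do not anticipate a genuine obstacle: the whole argument is a coordinatewise monotonicity estimate, and the only external inputs are the singular--value submultiplicativity quoted at the outset and the modular--norm inequality $\sum_{n}\varphi_j(s_n(T_j)/\|T_j\|_{(\varphi_j)})\le1$, both classical; the mild care needed is only in disposing of the degenerate cases where one of the three norms vanishes.
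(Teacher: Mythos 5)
Your proof is correct and follows essentially the same route as the paper: reduce the statement to the one-coordinate ideal property of $S_{\varphi_j}$ and assemble the two coordinates through the $\ell^p$ expression for $\|\cdot\|_{(\varphi),p}$. The only difference is that the paper simply cites the inequality $\|TB\|_{(\varphi)}\leq\|T\|_{(\varphi)}\|B\|_{\infty}$ from Theorem 3.3 of \cite{Ma}, whereas you reprove this coordinatewise fact directly from the singular-value submultiplicativity $s_{n}(AXC)\leq\|A\|_{\infty}\|C\|_{\infty}s_{n}(X)$ and the Luxemburg modular bound, which is a harmless, self-contained variant of the same argument.
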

\begin{proof}
From (2) of Theorem 3.3 in \cite{Ma} we know, for any $T\in S_{\varphi}$ and $B\in B(\mathcal{H})$, we have $\|TB\|_{(\varphi)}\leq\|T\|_{(\varphi)}\|B\|_{\infty}$.

Hence, for any $T\in \bigoplus\limits_{j=1}^{2}S_{\varphi_{j},p}$ and $B=(B_{1},B_{2})\in B(\mathcal{H})\bigoplus B(\mathcal{H})$, one has
\begin{eqnarray*}
\|TB\|_{(\varphi),p}&=&\left[\|T_{1}B_{1}\|^{p}_{(\varphi_{1})}+\|T_{2}B_{2}\|^{p}_{(\varphi_{2})}\right]^{\frac{1}{p}}\\
&\leq&\left[\|T_{1}\|^{p}_{(\varphi_{1})}\|B_{1}\|^{p}_{\infty}+\|T_{2}\|^{p}_{(\varphi_{2})}\|B_{2}\|^{p}_{\infty}\right]^{\frac{1}{p}}\\
&\leq&\left[\|T_{1}\|^{p}_{(\varphi_{1})}+\|T_{2}\|^{p}_{(\varphi_{2})}\right]^{\frac{1}{p}}\|\widetilde{B}\|_{\infty}\\
&=&\|T\|_{(\varphi),p}\cdot \|\widetilde{B}\|_{\infty}.
\end{eqnarray*}
 By the same method,  for any $C=(C_{1},C_{2})\in B(\mathcal{H})\bigoplus B(\mathcal{H})$, since $BT\in \bigoplus\limits_{j=1}^{2}S_{\varphi_{j},p}$, one get that $BTC\in \bigoplus\limits_{j=1}^{2}S_{\varphi_{j},p}$ with $$\|\widetilde{B}T\widetilde{C}\|_{(\varphi),p}\leq \|\widetilde{B}\|_{\infty}\|T\|_{(\varphi),p}\|\widetilde{C}\|_{\infty}.$$
\end{proof}
\begin{theorem}
If $T\in \bigoplus\limits_{j=1}^{2}S_{\varphi_{j},p}$, then for $1\leq p<\infty$, the  norm $\|\cdot\|_{(\varphi),p}$ is given by
$$\|T\|_{(\varphi),p}=\sup\left\{\upsilon(|TB|): B\in \bigoplus\limits_{j=1}^{2}S_{\psi_{j},q},\|B\|_{\psi,q}\leq1\right\}.$$
\end{theorem}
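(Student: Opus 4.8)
The plan is to establish the two opposite inequalities. The bound $\sup\{\upsilon(|TB|):\|B\|_{\psi,q}\le1\}\le\|T\|_{(\varphi),p}$ is immediate from part $(3)$ of Theorem 2.1: for every admissible $B$ one has $\upsilon(|TB|)\le\|T\|_{(\varphi),p}\,\|B\|_{\psi,q}\le\|T\|_{(\varphi),p}$, and taking the supremum over $B$ gives it. If $T=0$ both sides of the asserted identity vanish, so from now on assume $T\ne0$ and write $a_{j}=\|T_{j}\|_{(\varphi_{j})}$, so that $\|T\|_{(\varphi),p}=(a_{1}^{p}+a_{2}^{p})^{1/p}$ for $1\le p<\infty$.

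For the reverse inequality I would reduce to the one-component case and then optimise the $\ell_{p}$--$\ell_{q}$ pairing of the two-vector $(a_{1},a_{2})$. The key input is the classical associate-norm duality between the Luxemburg and the Orlicz norm on a single noncommutative Orlicz space,
$$\|T_{j}\|_{(\varphi_{j})}=\sup\{{\rm tr}(|T_{j}B_{j}|):\ \|B_{j}\|_{\psi_{j}}\le1\},$$
which follows from \cite{Rao} together with the one-tuple results of \cite{Ma,Ghadir} (at the level of singular values, from the Young equality for the pair $\varphi_{j},\psi_{j}$ applied to the $s_{n}(T_{j})$). Fix $\varepsilon>0$. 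For each $j$ with $T_{j}\ne0$ pick $B_{j}^{\varepsilon}$ with $\|B_{j}^{\varepsilon}\|_{\psi_{j}}\le1$ and ${\rm tr}(|T_{j}B_{j}^{\varepsilon}|)\ge a_{j}-\varepsilon$, and set $B_{j}^{\varepsilon}=0$ when $T_{j}=0$. Put
$$c_{j}=\frac{a_{j}^{\,p-1}}{(a_{1}^{p}+a_{2}^{p})^{1/q}}\qquad(j=1,2),$$
with the convention $c_{j}\equiv1$ in the case $p=1$, $q=\infty$. Since $(p-1)q=p$ one has $c_{1}^{q}+c_{2}^{q}=1$, so $B^{\varepsilon}=(c_{1}B_{1}^{\varepsilon},c_{2}B_{2}^{\varepsilon})$ obeys $\|B^{\varepsilon}\|_{\psi,q}=(c_{1}^{q}\|B_{1}^{\varepsilon}\|_{\psi_{1}}^{q}+c_{2}^{q}\|B_{2}^{\varepsilon}\|_{\psi_{2}}^{q})^{1/q}\le1$; moreover, using $c_{1}a_{1}+c_{2}a_{2}=(a_{1}^{p}+a_{2}^{p})^{1/p}$ (the equality case of H\"older for two-vectors),
$$\upsilon(|TB^{\varepsilon}|)=\sum_{j=1}^{2}c_{j}\,{\rm tr}(|T_{j}B_{j}^{\varepsilon}|)\ge\sum_{j=1}^{2}c_{j}(a_{j}-\varepsilon)=\|T\|_{(\varphi),p}-\varepsilon(c_{1}+c_{2}).$$
As $c_{1}+c_{2}\le2^{1/p}$ is bounded independently of $\varepsilon$, letting $\varepsilon\to0$ yields $\sup\{\cdots\}\ge\|T\|_{(\varphi),p}$, which together with the first inequality proves the theorem. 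The case $p=1$, $q=\infty$ is identical with $c_{j}\equiv1$, since then $\|B^{\varepsilon}\|_{\psi,\infty}=\max_{j}\|B_{j}^{\varepsilon}\|_{\psi_{j}}\le1$.

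The step I expect to be the main obstacle is securing the one-component identity $\|T_{j}\|_{(\varphi_{j})}=\sup\{{\rm tr}(|T_{j}B_{j}|):\|B_{j}\|_{\psi_{j}}\le1\}$, i.e.\ that the Luxemburg norm on $S_{\varphi_{j}}$ is exactly the norm dual to the Orlicz norm on $S_{\psi_{j}}$ under the trace pairing, and, in the same vein, verifying that this supremum is approximately attained by a \emph{single} operator $B_{j}^{\varepsilon}$ so that the two-vector optimisation above is legitimate. Once that is in hand, the remaining content is just the equality case of H\"older's inequality for vectors of length two, together with the bookkeeping needed to keep $\|B^{\varepsilon}\|_{\psi,q}\le1$ exactly while driving $\upsilon(|TB^{\varepsilon}|)$ up to $\|T\|_{(\varphi),p}$.
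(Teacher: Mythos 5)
Your proposal is correct in outline and is organized differently from the paper, so a comparison is in order. The paper does not invoke an abstract associate-norm duality: after getting the easy inequality from Theorem 2.1(3) exactly as you do, it reduces ``for simplicity'' to $T_{j}\geq 0$ with $\|T_{1}\|_{(\varphi_{1})}=\|T_{2}\|_{(\varphi_{2})}=1$ (so $\|T\|_{(\varphi),p}=2^{1/p}$), and then builds an explicit near-optimal witness $B_{j}=p_{j}((1-\varepsilon)T_{j})/\bigl(2^{1/q}[1+{\rm tr}(\psi_{j}(p_{j}((1-\varepsilon)T_{j})))]\bigr)$ from the left derivative $p_{j}$ of $\varphi_{j}$, using the Young equality ${\rm tr}\bigl((1-\varepsilon)T_{j}\,p_{j}((1-\varepsilon)T_{j})\bigr)={\rm tr}(\varphi_{j}((1-\varepsilon)T_{j}))+{\rm tr}(\psi_{j}(p_{j}((1-\varepsilon)T_{j})))$ and splitting the $\ell_{q}$ budget evenly through the factor $2^{-1/q}$. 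In other words, the single-space identity $\|T_{j}\|_{(\varphi_{j})}=\sup\{{\rm tr}(|T_{j}B_{j}|):\|B_{j}\|_{\psi_{j}}\leq 1\}$ that you flag as the main obstacle and outsource to Rao--Ren and the one-tuple results is precisely what the paper proves inline by this construction; if you want your argument self-contained you should reproduce that witness (or a singular-value/von Neumann trace-inequality reduction to the commutative Rao--Ren duality) for one component, but citing it is a legitimate shortcut and not a logical gap. Where your route genuinely buys something is the two-component bookkeeping: your H\"older equality-case weights $c_{j}=a_{j}^{p-1}/(a_{1}^{p}+a_{2}^{p})^{1/q}$ with $c_{1}^{q}+c_{2}^{q}=1$ and $c_{1}a_{1}+c_{2}a_{2}=(a_{1}^{p}+a_{2}^{p})^{1/p}$ handle arbitrary, unequal $\|T_{1}\|_{(\varphi_{1})},\|T_{2}\|_{(\varphi_{2})}$ (including a zero component and the $p=1$, $q=\infty$ case), whereas the paper's normalization of both components to norm one is a real restriction --- two components cannot simultaneously be rescaled to norm one by the homogeneity of a single norm --- and its final chain of estimates (where terms of the form $\tfrac{(1-\delta)+x}{1+x}<1$ are claimed to sum to more than $2$) is shakier than your clean passage to the limit $\varepsilon\to 0$ with $c_{1}+c_{2}\leq 2^{1/p}$.
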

\begin{proof}
If $\|B\|_{\psi,q}\leq1$. One side, by (3) of Theorem 2.1, we have
$$\upsilon(|TB|)\leq\|T\|_{(\varphi),p}\cdot \|B\|_{\psi,q}\leq \|T\|_{(\varphi),p}.$$
On the other side, we may take, for simplicity, that $T_{j}\geq0$ and  $\|T_{j}\|_{(\varphi_{j})}=1$, then $\|T\|_{(\varphi),p}=2^{\frac{1}{p}}$.

By Proposition 3.4 of \cite{Ghadir}, for any $1>\varepsilon>0$ there exists $\delta\in(0,\varepsilon)$ such that $$1-\delta\leq{\rm tr}\left[\varphi_{j}\left((1-\varepsilon)T_{j}\right)\right]\leq\|(1-\varepsilon)T_{j}\|_{(\varphi_{j})}=1-\varepsilon.$$

Recalling that $p$ is the left derivative of $\varphi$, if we set $$B_{j}=\frac{p_{j}((1-\varepsilon)T_{j})}{2^{\frac{1}{q}}[1+{\rm tr}(\psi_{j}( p_{j}((1-\varepsilon)T_{j})))]},$$
then  $B=(B_{1},B_{2})\in \bigoplus\limits_{j=1}^{2}S_{\psi_{j},q}$. Moreover by 1.7 of \cite{Zhenhua} and 1.9 of \cite{Chen}(Young Inequality) we have
\begin{eqnarray*}
{\rm tr}\left(T_{j}\cdot B_{j}\right)&=&\frac{{\rm tr}\left((1-\varepsilon)T_{j}\cdot p_{j}((1-\varepsilon)T_{j})\right)}{(1-\varepsilon)2^{\frac{1}{q}}[1+{\rm tr}(\psi_{j}( p_{j}((1-\varepsilon)T_{j})))]}\\
&=&\frac{{\rm tr}\left(\varphi_{j}((1-\varepsilon)T_{j})\right)+{\rm tr}\left(\psi_{j}\left(p_{j}((1-\varepsilon)T_{j})\right)\right)}{(1-\varepsilon)2^{\frac{1}{q}}[1+{\rm tr}(\psi_{j}( p_{j}((1-\varepsilon)T_{j})))]}\\
&\leq&\frac{1+{\rm tr}\left(\psi_{j}\left(p_{j}((1-\varepsilon)T_{j})\right)\right)}{(1-\varepsilon)2^{\frac{1}{q}}[1+{\rm tr}(\psi_{j}( p_{j}((1-\varepsilon)T_{j})))]}\\
&=&\frac{1}{2^{\frac{1}{q}}(1-\varepsilon)}.
\end{eqnarray*}
Thus, $\|B_{j}\|_{\psi_{j}}\leq 2^{\frac{1}{q}}$ since $\varepsilon$ is arbitrary.

Hence,
$$\|B\|_{\psi,q}=\left(\sum\limits^{2}_{j=1}\|B_{j}\|_{\psi_{j}}^{q}\right)^{\frac{1}{q}}\leq1.$$

However, one has
\begin{eqnarray*}
\sup\{\upsilon(|TB|),\|B\|_{\psi,q}\leq1\}&=&\sup\left\{\sum\limits^{n}_{j=1}{\rm tr}(T_{j}B_{j}):B_{j}\in \bigoplus\limits_{j=1}^{2}S_{\psi_{j},q},\|B\|_{\psi,q}\leq1\right\}\\
&>&\sup\left\{\sum\limits^{2}_{j=1}{\rm tr}((1-\varepsilon)T_{j} B_{j}):B_{j}\in \bigoplus\limits_{j=1}^{2}S_{\psi_{j},q}, \|B\|_{\psi_{j},q}\leq1\right\}\\
&=&\frac{1}{2^{\frac{1}{q}}}\sup\left\{\sum\limits^{2}_{j=1}\frac{{\rm tr}(\varphi_{j}(1-\varepsilon)T_{j})+{\rm tr}(\psi_{j}(p(1-\varepsilon)T_{j}))}{1+{\rm tr}(\psi_{j}(p(1-\varepsilon)T_{j}))}\right\}\\
&\geq&\frac{1}{2^{\frac{1}{q}}}\sup\left\{\sum\limits^{2}_{j=1}\frac{(1-\delta)+{\rm tr}(\psi_{j}(p(1-\varepsilon)T_{j}))}{1+{\rm tr}(\psi_{j}(p(1-\varepsilon)T_{j}))}\right\}\\
&>&\frac{2}{2^{\frac{1}{q}}}\\
&=&2^{\frac{1}{p}}\\
&=&\|T\|_{(\varphi),p}.
\end{eqnarray*}
from which the conlusion can be desired.
\end{proof}
\section{Riesz-Thorin Interpolation Theorem }
\begin{definition}\cite{Cleaver}
Let $\varphi_{1}$ and $\varphi_{2}$ be a pair of N-functions,  and $0\leq s\leq1$ be fixed. Then $\varphi_{s}$ is the uniquely defined  inverse of $\varphi_{s}^{-1}(u)=[\varphi_{1}^{-1}(u)]^{1-s}[\varphi_{2}^{-1}(u)]^{s}$ for $u\geq0$, where $\varphi_{i}^{-1}$ is the uniquely inverse of the N-function $\varphi_{i}$, and $\varphi_{s}$ is called an intermediate function.
\end{definition}
\begin{lemma}(Three-Lines-theorem)\cite{Rao}
Let $S=\{z=x+iy: 0<x<1\}$ be the strip in $\mathbb{C}$. If $f$ is an analytic function on $S$ which is bounded and continuous on the closure $\overline{S}$ of $S$, and let the bound of $f$ on the vertical line at $x=s$ in $S$ be $M_{s}$, i.e., $M_{s}=\sup\{|f(s+iy)|: y\in \mathds{R}\}$, then one has $$M_{s}\leq M_{0}^{1-s}M_{1}^{s},\\ 0\leq s\leq1.$$
\end{lemma}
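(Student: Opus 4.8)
The plan is to deduce this Hadamard-type three-lines estimate from the maximum modulus principle on a bounded rectangle, after multiplying $f$ by two auxiliary entire factors: one that flattens the prescribed bounds on the two edges of $S$ to the constant $1$, and one of Gaussian type that forces decay as $|\mathrm{Im}\,z|\to\infty$, so that the unbounded strip can be replaced by a compact one.

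First I would reduce to the case $M_0,M_1>0$: replacing $M_0,M_1$ by $M_0+\delta,M_1+\delta$ with $\delta>0$, proving the inequality in that case, and letting $\delta\to0^+$ at the end recovers the general statement (and also handles the degenerate possibility $M_0=0$ or $M_1=0$). Assuming $M_0,M_1>0$, set $\lambda(z)=M_0^{z-1}M_1^{-z}=\exp\big((z-1)\log M_0-z\log M_1\big)$, an entire, zero-free function with $|\lambda(x+iy)|=M_0^{x-1}M_1^{-x}$ depending on $x$ only. Then $g:=f\cdot\lambda$ is analytic on $S$, continuous and bounded on $\overline S$ (because $f$ is bounded there and $M_0^{x-1}M_1^{-x}$ is bounded for $x\in[0,1]$), and $|g(iy)|=|f(iy)|/M_0\le1$, $|g(1+iy)|=|f(1+iy)|/M_1\le1$. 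The desired conclusion $M_s\le M_0^{1-s}M_1^{s}$ is exactly $|g|\le1$ on $\overline S$.

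Next, for each $\varepsilon>0$ I would set $h_\varepsilon(z)=g(z)\,e^{\varepsilon(z^2-z)}$. Writing $z=x+iy$ gives $\mathrm{Re}(z^2-z)=x^2-x-y^2\le-y^2$ for $x\in[0,1]$, so $|h_\varepsilon(x+iy)|\le|g(x+iy)|\,e^{-\varepsilon y^2}$ throughout $\overline S$; in particular $|h_\varepsilon|\le1$ on the two vertical edges $x=0,1$, and, since $g$ is bounded on $\overline S$, $|h_\varepsilon(x+iy)|\to0$ as $|y|\to\infty$ uniformly in $x\in[0,1]$. Hence there is $Y>0$ with $|h_\varepsilon|\le1$ on the horizontal segments $\{x\pm iY:0\le x\le1\}$ too. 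Applying the classical maximum modulus principle to $h_\varepsilon$ on the bounded rectangle $R_Y=[0,1]\times[-Y,Y]$, on whose boundary we have just shown $|h_\varepsilon|\le1$, gives $|h_\varepsilon|\le1$ on $R_Y$, and combined with the bound for $|y|\ge Y$ this yields $|h_\varepsilon|\le1$ on all of $\overline S$.

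Finally, fixing $z=s+iy$ and unwinding the factors, $|g(z)|=|h_\varepsilon(z)|\,e^{-\varepsilon\,\mathrm{Re}(z^2-z)}\le e^{\varepsilon(y^2-s^2+s)}$; letting $\varepsilon\to0^+$ gives $|g(s+iy)|\le1$, that is, $|f(s+iy)|\le|\lambda(s+iy)|^{-1}=M_0^{1-s}M_1^{s}$, and taking the supremum over $y$ finishes the proof. I do not expect a genuine obstacle here: the whole argument is the standard Phragm\'en--Lindel\"of device. The one point that needs care is the passage from the unbounded strip to the rectangle $R_Y$ — one must check that the subexponential factor $e^{\varepsilon(z^2-z)}$ simultaneously keeps modulus $\le1$ on the vertical edges, produces genuine decay at $\pm i\infty$, and disappears in the limit $\varepsilon\to0$ at each interior point — together with the routine verification that $g$ is bounded on $\overline S$, which is precisely what makes that decay uniform in $x$.
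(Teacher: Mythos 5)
Your proof is correct and complete: the normalization by $M_{0}^{z-1}M_{1}^{-z}$, the Phragm\'en--Lindel\"of factor $e^{\varepsilon(z^{2}-z)}$ to force decay at $\pm i\infty$, the maximum modulus principle on a truncated rectangle, and the limits $\varepsilon\to0^{+}$ and $\delta\to0^{+}$ are all handled properly. The paper itself does not prove this lemma but simply quotes it from the cited reference, and your argument is precisely the classical textbook proof that the citation points to, so there is no substantive difference in approach to report.
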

\begin{theorem}
Let $\varphi_{i}=(\varphi_{i1},\varphi_{i2}),\phi_{i}=(\phi_{i1},\phi_{i2}),i=1,2$ be 2-tuples of N-functions and $0\leq r_{1},r_{2},t_{1},t_{2}\leq\infty$. Next let $\varphi_{s}=(\varphi_{s1},\varphi_{s2}),\phi_{s}=(\phi_{s1},\phi_{s2})$ be the associated intermediate N-functions, $$\frac{1}{r_{s}}=\frac{1-s}{r_{1}}+\frac{s}{r_{2}}, \frac{1}{t_{s}}=\frac{1-s}{t_{1}}+\frac{s}{t_{2}}, 0\leq s\leq1.$$
If $F:\bigoplus\limits_{j=1}^{2}S_{\varphi_{ij},r_{i}}\rightarrow \bigoplus\limits_{j=1}^{2}S_{\phi_{ij},t_{i}}$ is a bounded linear operator with bounds $K_{1},K_{2}$ satisfying
$$\|FT\|_{(\phi_{i}),t_{i}}\leq K_{i}\|T\|_{(\varphi_{i}),r_{i}}, T\in \bigoplus\limits_{j=1}^{2}S_{\varphi_{ij},r_{i}}, i=1,2,$$
then $F$ is also defined on $\bigoplus\limits_{j=1}^{2}S_{\varphi_{sj},r_{s}}$ into $\bigoplus\limits_{j=1}^{2}S_{\phi_{sj},t_{s}}$ for all $0\leq s\leq1$ and one has,
$$\|FT\|_{(\phi_{s}),t_{s}}\leq K_{1}^{1-s}K_{2}^{s}\|T\|_{(\varphi_{s}),r_{s}}.$$
\end{theorem}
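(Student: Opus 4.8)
By homogeneity of $\|\cdot\|_{(\varphi_s),r_s}$ I may assume $\|T\|_{(\varphi_s),r_s}=1$; since an operator with finitely many non-zero singular values lies in \emph{every} $S_{\varphi_{ij}}(\mathcal{H})$, it suffices to establish the estimate for such $T$ and then pass to the general case by density and boundedness of $F$, and for such $T$ the element $FT$ is unambiguously given by either endpoint hypothesis. By Theorem~2.3, $\|FT\|_{(\phi_s),t_s}=\sup\{\upsilon(|FT\cdot B|):\|B\|_{\chi_s,t_s'}\le1\}$, where $\chi_s=(\chi_{s1},\chi_{s2})$ is the tuple of N-functions complementary to $\phi_s$ and $\tfrac1{t_s}+\tfrac1{t_s'}=1$. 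Fix $\varepsilon>0$ and a finite-rank $B=(B_1,B_2)$ with $\|B\|_{\chi_s,t_s'}\le1$; after multiplying each $B_j$ by a unimodular scalar (which alters neither $\|B_j\|_{\chi_{sj}}$ nor $\|B\|_{\chi_s,t_s'}$) I may assume ${\rm tr}((FT)_jB_j)\ge0$, so that $\sum_{j}{\rm tr}((FT)_jB_j)=\upsilon(|FT\cdot B|)\ge\|FT\|_{(\phi_s),t_s}-\varepsilon$.

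\textbf{The analytic families.} Set $a_j=\|T_j\|_{(\varphi_{sj})}$, so $a_1^{r_s}+a_2^{r_s}=1$; write $T_j/a_j=w_j\sum_n\tilde\mu_{jn}P_{jn}$ (polar part $w_j$, orthogonal rank-one $P_{jn}$) and $u_{jn}=\varphi_{sj}(\tilde\mu_{jn})$, so $\sum_n u_{jn}\le1$ and, by Definition~3.1, $\tilde\mu_{jn}=[\varphi_{1j}^{-1}(u_{jn})]^{1-s}[\varphi_{2j}^{-1}(u_{jn})]^{s}$. Define the entire operator-valued function
$$T_j(z)=a_j^{\,r_s\left(\frac{1-z}{r_1}+\frac{z}{r_2}\right)}\,w_j\sum_n[\varphi_{1j}^{-1}(u_{jn})]^{1-z}[\varphi_{2j}^{-1}(u_{jn})]^{z}\,P_{jn},\qquad T(z)=(T_1(z),T_2(z)).$$
Then $T(s)=T$; on ${\rm Re}\,z=0$ the singular values of $T_j(iy)$ are $a_j^{r_s/r_1}\varphi_{1j}^{-1}(u_{jn})$, hence ${\rm tr}\bigl(\varphi_{1j}(T_j(iy)/a_j^{r_s/r_1})\bigr)=\sum_n u_{jn}\le1$, so $\|T_j(iy)\|_{(\varphi_{1j})}\le a_j^{r_s/r_1}$ and $\|T(iy)\|_{(\varphi_1),r_1}\le\bigl(\sum_j a_j^{r_s}\bigr)^{1/r_1}=1$; symmetrically $\|T(1+iy)\|_{(\varphi_2),r_2}\le1$. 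I build $B(z)=(B_1(z),B_2(z))$ by the same recipe applied to the complementary intermediate N-functions $\chi_{ij}$ and the conjugate exponents $t_i'$, so that $B(s)=B$ while $\|B(iy)\|_{\chi_1,t_1'}\le1$ and $\|B(1+iy)\|_{\chi_2,t_2'}\le1$; the control of these \emph{Orlicz} norms on the boundary is exactly what the Young-inequality computation in the proof of Theorem~2.3 (cf. Theorem~8 of \cite{Rao}) supplies.

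\textbf{Three lines and conclusion.} Put $f(z)=\upsilon\bigl(FT(z)\cdot B(z)\bigr)=\sum_{j=1}^{2}{\rm tr}\bigl((FT(z))_jB_j(z)\bigr)$. Since $z\mapsto T(z)$ is entire, $F$ is bounded linear, $B_j(z)$ is finite rank with entire scalar coefficients, and all coefficients of $T(z),B(z)$ remain bounded on the closed strip $\overline S$, the function $f$ is analytic on $S$ and bounded and continuous on $\overline S$. On ${\rm Re}\,z=0$, Theorem~2.1(3) (Hölder) and the first endpoint bound give $|f(iy)|\le\|FT(iy)\|_{(\phi_1),t_1}\,\|B(iy)\|_{\chi_1,t_1'}\le K_1\|T(iy)\|_{(\varphi_1),r_1}\le K_1$, so $M_0\le K_1$; likewise $M_1\le K_2$. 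Lemma~3.1 then yields $|f(s)|\le M_0^{1-s}M_1^{s}\le K_1^{1-s}K_2^{s}$. As $f(s)=\upsilon(|FT\cdot B|)\ge\|FT\|_{(\phi_s),t_s}-\varepsilon$ and $\varepsilon>0$ is arbitrary, $\|FT\|_{(\phi_s),t_s}\le K_1^{1-s}K_2^{s}$ for every finite-rank $T$ with $\|T\|_{(\varphi_s),r_s}=1$; homogeneity and density deliver the stated inequality and show that $F$ is indeed defined on $\bigoplus\limits_{j=1}^{2}S_{\varphi_{sj},r_s}$.

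\textbf{Main obstacle.} The technical heart is the construction of the two analytic families in the second step: one must enforce \emph{at once} the Orlicz-function interpolation (through $\varphi_{sj}^{-1}=(\varphi_{1j}^{-1})^{1-s}(\varphi_{2j}^{-1})^{s}$) and the direct-sum interpolation (through $\tfrac1{r_s}=\tfrac{1-s}{r_1}+\tfrac{s}{r_2}$), and keep the endpoint norms on both vertical boundary lines bounded by exactly $1$ — in particular controlling the Orlicz norms $\|B(\cdot)\|_{\chi_i,t_i'}$, which are not read off a single modular — so that no stray constant spoils the bound $K_1^{1-s}K_2^{s}$. Secondary care is needed to check that $f$ is genuinely analytic and bounded on the strip when $FT(z)$ is only known as an analytic $\bigoplus\limits_{j}S_{\phi_{1j},t_1}$-valued function, and to run the density reduction (for which one typically invokes $\varphi_{sj}\in\delta_2(0)$, or restricts to the closure of the finite-rank operators).
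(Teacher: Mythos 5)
Your argument follows the same skeleton as the paper's proof: reduce to finite-rank operators, use the duality formula of Theorem~2.3 to write $\|FT\|_{(\phi_{s}),t_{s}}$ as a supremum of pairings $\upsilon(|FT\cdot B|)$, build analytic operator families $T(z),B(z)$ out of the intermediate-function relation $\varphi_{s}^{-1}=(\varphi_{1}^{-1})^{1-s}(\varphi_{2}^{-1})^{s}$, bound $f(z)=\upsilon(B(z)\,FT(z))$ on the two boundary lines via the H\"older inequality of Theorem~2.1(3) combined with the endpoint hypotheses on $F$, and conclude with the three-line lemma. Where you genuinely depart from the paper --- and improve on it --- is the handling of the direct-sum exponents: your scalar factor $a_{j}^{\,r_{s}\left(\frac{1-z}{r_{1}}+\frac{z}{r_{2}}\right)}$ (and its analogue for $B$ with $t_{1}',t_{2}'$) makes the boundary estimates come out in the endpoint norms $\|\cdot\|_{(\varphi_{1}),r_{1}}$ and $\|\cdot\|_{(\varphi_{2}),r_{2}}$, which is exactly what the hypothesis $\|FT\|_{(\phi_{i}),t_{i}}\leq K_{i}\|T\|_{(\varphi_{i}),r_{i}}$ controls. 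The paper's families carry no such factor, and its boundary estimates are stated with the intermediate exponents (e.g. $\|T(it)\|_{(\varphi_{1}),r_{s}}$), which do not match the assumption on $F$ unless $r_{1}=r_{2}$ and $t_{1}=t_{2}$; so your construction repairs a real defect in the printed proof, at the cost of a slightly more elaborate family.

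One caveat, which your ``main obstacle'' paragraph flags but does not actually resolve (and which the paper glosses over in precisely the same way): the dual family must satisfy two requirements simultaneously, namely $B(s)=B$ for $B$ ranging over the unit ball used in Theorem~2.3 (i.e. measured by the functions complementary to $\phi_{sj}$), and boundary bounds in the \emph{Orlicz} norms attached to the complements of $\phi_{1j},\phi_{2j}$. The first needs the complement of the intermediate function to coincide with the intermediate of the complements, which holds only up to equivalence (via $u\leq\phi^{-1}(u)\chi^{-1}(u)\leq 2u$); the second asks for $\|B_{j}(iy)\|_{\chi_{1j}}\leq b_{j}^{t_{s}'/t_{1}'}$ in the Orlicz norm, which a modular computation alone yields only up to a factor $2$. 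Either point, if handled crudely, inserts an absolute constant in front of $K_{1}^{1-s}K_{2}^{s}$; getting the clean constant requires the finer argument of the commutative case (Rao--Ren/Cleaver). Since the paper itself simply asserts $\|B(it)\|_{\psi_{1},t_{s}}=\|B\|_{\psi_{s},t_{s}}\leq1$ without justification, your proposal is no less rigorous than the paper at this step and is more careful everywhere else, but you should either prove these two boundary facts or state the theorem with the corresponding harmless constant.
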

\begin{proof}
Like in the commutative case, the proof is based on the Lemma 3.1. Let $T=(T_{1},T_{2})\in \bigoplus\limits_{j=1}^{2}S_{\varphi_{ij},r_{i}}, B=(B_{1},B_{2})\in\bigoplus\limits_{j=1}^{2}S_{\psi_{ij},t_{i}}$. By approximation, both $|T|$ and $|B|$ are linear combinations of mutually orthogonal projections, hence, $$T_{k}=U_{k}|T_{k}|,B_{k}=V_{k}|B_{k}|,$$ where $|T_{k}|=\sum\limits_{j=1}^{n}\alpha_{j}e_{kj}, |B_{k}|=\sum\limits_{j=1}^{n}\beta_{j}e'_{kj}, k=1,2$ and $\alpha_{j},\beta_{j}\in (0,\infty) $.
For convenience, we assume that $\|T\|_{(\varphi_{i}),r_{i}}\leq1, \|B\|_{\psi_{i},t_{i}}\leq1$.

For $z=\mathds{C}$ and $k=1,2$, we define
$$T(z)=(T_{1}(z),T_{2}(z))$$  and $$B(z)=(B_{1}(z),B_{2}(z)),$$
where
$$T_{k}(z)=U_{k}\varphi_{sk}\left[(\varphi_{1k}^{-1})^{1-z}(\varphi_{2k}^{-1})^{z}\right](|T_{k}|),$$
$$B_{k}(z)=V_{k}\psi_{sk}\left[(\psi_{1k}^{-1})^{1-z}(\psi_{2k}^{-1})^{z}\right](|B_{k}|).$$
Then,
\begin{eqnarray*}
T_{k}(z)&=&U_{k}\varphi_{sk}\left[\left(\varphi_{1k}^{-1}\left(\sum_{j=1}^{n}\alpha_{j}e_{kj}\right)\right)^{1-z}\left(\varphi_{2k}^{-1}\left(\sum\limits_{j=1}^{n}\alpha_{j}e_{kj}\right)\right)^{z}\right]\\
&=&\sum\limits_{j=1}^{n}\varphi_{sk}\left[\left(\varphi_{1k}^{-1}(\alpha_{j})\right)^{1-z}\left(\varphi_{2k}^{-1}(\alpha_{j})\right)^{z}\right]U_{k} e_{kj}.
\end{eqnarray*}
Hence, $z\mapsto T(z)$ is an analytic function on $\mathds{C}$ with value in $B{(\mathcal{H})}$. The same reduction applies to $B$.

Now we define a bounded entire function $$H(z)=\upsilon(B(z)FT(z)).$$
If $z=it$ for $t\in \mathds{R}$, we have
\begin{eqnarray*}
T_{k}(it)
&=&\sum\limits_{j=1}^{n}\varphi_{sk}[\left(\varphi_{1k}^{-1}(\alpha_{j})\right)^{1-it}(\varphi_{2k}^{-1}(\alpha_{j}))^{it}]U_{k} e_{kj}\\
&=&\sum\limits_{j=1}^{n}\varphi_{sk}\left[\left(\frac{\varphi_{2k}^{-1}(\alpha_{j})}{\varphi_{1k}^{-1}(\alpha_{j})}\right)^{it}\right]U_{k} e_{kj}\cdot \sum\limits_{j=1}^{n}\varphi_{sk}\left[\varphi_{1k}^{-1}(\alpha_{j})\right]U_{k} e_{kj}\\
&=&\left[\varphi_{sk}\left(\frac{\varphi_{2k}^{-1}}{\varphi_{1k}^{-1}}(|T_{k}|)\right)\right]^{it}\cdot \varphi_{sk}\left(\varphi_{1k}^{-1}(|T_{k}|)\right).
\end{eqnarray*}
Hence,$$|T_{k}(it)|^{2}=T_{k}(it)^{*}T_{k}(it)=\left[\varphi_{sk}\left(\varphi_{1k}^{-1}(|T_{k}|)\right)\right]^{2}$$ which means $$|T_{k}(it)|=\varphi_{sk}\left(\varphi_{1k}^{-1}(|T_{k}|)\right).$$
Hence, we have $\varphi_{1k}(|T_{k}(it)|)=\varphi_{sk}(|T_{k}|)$ which implies that $$\|T_{k}(it)\|_{(\varphi_{1k})}=\|T_{k}\|_{(\varphi_{sk})}$$ and
\begin{eqnarray*} \upsilon(\varphi_{1}(T(it)))&=&\sum\limits_{j=1}^{2}{\rm tr}\left[\varphi_{1j}\left[\varphi_{sj}\left(\varphi_{1j}^{-1}(|T_{j}|)\right)\right]\right]\\
&=&{\rm tr}(\varphi_{s1}(|T_{1}|))+{\rm tr}(\varphi_{s2}(|T_{2}|))\\
&=&\upsilon(\varphi_{s}(|T|)),
\end{eqnarray*}
where $$\|T(it)\|_{(\varphi_{1}),r_{s}}=\|T\|_{(\varphi_{s}),r_{s}}\leq K_{1}.$$
Similarly $\|B(it)\|_{\psi_{1},t_{s}}=\|B\|_{\psi_{s},t_{s}}\leq1$.
Thus by (3) of the Lemma 2.1 and the assumption on $F$, one has
$$\upsilon(|B(it)FT(it)|)\leq  K_{1}\|B(it)\|_{\psi_{1},t_{s}}\|T(it)\|_{(\varphi_{1}),r_{s}}\leq K_{1}.$$
It then follows that $|H(it)|\leq 1$ for any $t\in \mathds{R}$.
 In the same way, we show $|H(1+it)|\leq K_{2}$.

 Therefore, for any $0\leq s\leq1$, from Lemma 3.1 one get $$|H(s+it)|=\upsilon(|B(s+it)FT(s+it)|)\leq K_{1}^{1-s}K_{2}^{s}.$$
Combining with Theorem 2.3 one has $$\|FT\|_{(\phi_{s}),t_{s}}\leq K_{1}^{1-s}K_{2}^{s}\|T\|_{(\varphi_{s}),r_{s}},$$
as desired.
\end{proof}
The following result is noncommutative Orlicz sequence spaces analogs of the Clarkson inequality for the $S_{p}$.
\begin{theorem}
If $\varphi_{s}$ is an intermediate
function of $\varphi_{1},\varphi_{2}$ and $T=(T_{1},T_{2})\in \bigoplus\limits_{j=1}^{2}S_{\varphi_{ij},r_{i}}$, one has
$$\left(\|T_{1}+T_{2}\|_{(\varphi_{s})}^{\frac{2}{s}}+\|T_{1}-T_{2}\|_{(\varphi_{s})}^{\frac{2}{s}}\right)^{\frac{s}{2}}
\leq 2^{\frac{s}{2}}\left(\|T_{1}\|_{(\varphi_{s})}^{\frac{2}{2-s}}+\|T_{2}\|_{(\varphi_{s})}^{\frac{2}{2-s}}\right)^{\frac{2-s}{2}}.$$
\end{theorem}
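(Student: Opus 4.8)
The plan is to obtain the inequality from the Riesz--Thorin interpolation theorem (Theorem 3.1) applied to the ``rotation'' operator
$$F\colon(T_1,T_2)\longmapsto(T_1+T_2,\ T_1-T_2),$$
regarded as a linear map between $2$-tuple spaces. First I would pick the interpolation data so that the indices produced by Theorem 3.1 coincide with the exponents occurring in the statement: take $r_1=1,\ r_2=2,\ t_1=\infty,\ t_2=2$, so that $\frac1{r_s}=\frac{1-s}{1}+\frac s2=\frac{2-s}{2}$ and $\frac1{t_s}=\frac{1-s}{\infty}+\frac s2=\frac s2$, i.e. $r_s=\frac2{2-s}$ and $t_s=\frac2s$. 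For the N-functions I would take both components of the first endpoint tuple equal to the given $\varphi_1$ and both components of the second endpoint tuple equal to $t\mapsto t^2$, with $\phi_{ij}=\varphi_{ij}$; then each coordinate of the second endpoint space is the Hilbert--Schmidt class $S_2$, and the intermediate tuple on both the domain and the target side is $(\varphi_s,\varphi_s)$ with $\varphi_s$ precisely the intermediate function of the statement.

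Next I would verify the two endpoint bounds. At $i=1$, for $F\colon\bigoplus_{j}S_{\varphi_1,1}\to\bigoplus_{j}S_{\varphi_1,\infty}$ the triangle inequality in $S_{\varphi_1}$ gives
$$\|F(T_1,T_2)\|_{(\varphi_1),\infty}=\max\bigl\{\|T_1+T_2\|_{(\varphi_1)},\ \|T_1-T_2\|_{(\varphi_1)}\bigr\}\le\|T_1\|_{(\varphi_1)}+\|T_2\|_{(\varphi_1)}=\|(T_1,T_2)\|_{(\varphi_1),1},$$
so $K_1=1$. At $i=2$, for $F\colon\bigoplus_{j}S_2\to\bigoplus_{j}S_2$ the parallelogram identity in the Hilbert space $(S_2,\|\cdot\|_2)$ gives $\|T_1+T_2\|_2^2+\|T_1-T_2\|_2^2=2\bigl(\|T_1\|_2^2+\|T_2\|_2^2\bigr)$, hence $\|F(T_1,T_2)\|_{(\varphi_2),2}=\sqrt2\,\|(T_1,T_2)\|_{(\varphi_2),2}$ and $K_2=\sqrt2$. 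In both cases $F$ is well defined and bounded on the relevant tuple space simply because each $S_\varphi$ is a linear space, and the same remark shows $F$ acts on the intermediate space $\bigoplus_{j}S_{\varphi_s,r_s}$.

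Then Theorem 3.1 applies and yields, for every $0\le s\le1$,
$$\|F(T_1,T_2)\|_{(\varphi_s),t_s}\le K_1^{1-s}K_2^{s}\,\|(T_1,T_2)\|_{(\varphi_s),r_s}=2^{s/2}\,\|(T_1,T_2)\|_{(\varphi_s),r_s}.$$
Unwinding the tuple norms with $t_s=\frac2s$ on the left and $r_s=\frac2{2-s}$ on the right turns this into
$$\Bigl(\|T_1+T_2\|_{(\varphi_s)}^{2/s}+\|T_1-T_2\|_{(\varphi_s)}^{2/s}\Bigr)^{s/2}\le2^{s/2}\Bigl(\|T_1\|_{(\varphi_s)}^{2/(2-s)}+\|T_2\|_{(\varphi_s)}^{2/(2-s)}\Bigr)^{(2-s)/2},$$
which is the assertion; the degenerate value $s=0$ is simply the triangle inequality in $S_{\varphi_1}$ coming from the first endpoint, so one may restrict to $0<s\le1$ when reading the exponents $2/s$.

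The step I expect to be the genuine obstacle is the $i=2$ endpoint, and specifically the value $K_2=\sqrt2$. This is exactly the parallelogram constant, and for a general Orlicz space $S_\varphi$ the estimate $\|T_1+T_2\|_{(\varphi)}^2+\|T_1-T_2\|_{(\varphi)}^2\le2\bigl(\|T_1\|_{(\varphi)}^2+\|T_2\|_{(\varphi)}^2\bigr)$ fails (already in $S_1$: for two orthogonal rank-one projections the left side is $8$ and the right side $4$), so the interpolation really has to be run against a Hilbertian endpoint. Thus in the statement $\varphi_s$ is to be read as the intermediate function between $\varphi_1$ and the N-function generating $S_2$, which is also the configuration needed afterwards to estimate the von Neumann--Jordan constant of $S_{\varphi_s}$. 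The remaining points are clerical: checking that Theorem 3.1 is legitimately invoked with the boundary datum $t_1=\infty$, that the membership hypothesis on $T$ is used only through $T\in\bigoplus_{j}S_{\varphi_{sj},r_s}$, and that $\varphi_s^{-1}(u)=(\varphi_1^{-1}(u))^{1-s}u^{s/2}$ is again the inverse of an N-function so that Definition 3.1 applies.
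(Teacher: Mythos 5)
Your proposal is correct and follows essentially the same route as the paper: the same operator $F(T_{1},T_{2})=(T_{1}+T_{2},T_{1}-T_{2})$, the same endpoint data $r_{1}=1$, $t_{1}=\infty$, $r_{2}=t_{2}=2$ with second endpoint function $u\mapsto u^{2}$ (so $K_{1}=1$ by the triangle inequality and $K_{2}=\sqrt{2}$ by the parallelogram law in $S_{2}$), followed by Theorem 3.1 and unwinding the tuple norms with $r_{s}=\frac{2}{2-s}$, $t_{s}=\frac{2}{s}$. Your observation that the second endpoint must be Hilbertian, i.e.\ that $\varphi_{s}$ is really the intermediate function between $\varphi$ and $u^{2}$, matches exactly the specialization the paper itself makes in its proof.
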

\begin{proof}
To further identify with the proceeding theorem, we take $\phi_{1}=\varphi_{1}=(\varphi,\varphi),\phi_{2}=\varphi_{2}=(\varphi_{0},\varphi_{0})$ and for $0< s\leq1$, $$\varphi_{s}^{-1}(u)=\left[\varphi^{-1}(u)\right]^{1-s}\left[\varphi_{0}^{-1}(u)\right]^{s}
=\left[\varphi^{-1}(u)\right]^{1-s}u^{\frac{s}{2}},$$ where $\varphi_{0}(u)=u^{2}$.

Hence,
\begin{equation}
\|T\|_{(\varphi_{1}),r_{1}}=
\begin{cases} \left[\|T_{1}\|_{(\varphi)}^{r_{1}}+\|T_{2}\|_{(\varphi)}^{r_{1}}\right]^{\frac{1}{r_{1}}}, &1\leq r_{1}<\infty, \nonumber\\
\max\{\|T_{1}\|_{(\varphi)},\|T_{2}\|_{(\varphi)}\}, &r_{1}=\infty.
\end{cases}
\end{equation}
Set $r_{1}=1,r_{2}=t_{2}=2$ and $t_{1}=+\infty$ and define the linear operator $F: \bigoplus\limits_{j=1}^{2}S_{\varphi_{i},r_{i}}\rightarrow \bigoplus\limits_{j=1}^{2}S_{\phi_{i},t_{i}}$ by the equation $F(T_{1},T_{2})=(T_{1}+T_{2},T_{1}-T_{2})$. Then it follows that
\begin{eqnarray*} \|FT\|_{(\phi_{1}),t_{1}}&=&\max\{\|T_{1}+T_{2}\|_{(\varphi)},\|T_{1}-T_{2}\|_{(\varphi)}\}\\
&\leq& \|T_{1}\|_{(\varphi)}+\|T_{2}\|_{(\varphi)}\\
&=&K_{1}\|T\|_{(\varphi_{1}),r_{1}}.
\end{eqnarray*}
Hence, $K_{1}=1$  and since $\|\cdot\|_{(\varphi_{0})}=\|\cdot\|_{2}$, we find
\begin{eqnarray*} \|FT\|_{(\phi_{2}),t_{2}}&=&\left[\|T_{1}+T_{2}\|^{2}_{2}+\|T_{1}-T_{2}\|^{2}_{2}\right]^{\frac{1}{2}}\\
&=& \sqrt{2}\left[\|T_{1}\|^{2}_{2}+\|T_{2}\|^{2}_{2}\right]^{\frac{1}{2}}\\
&=&K_{2}\|T\|_{(\varphi_{2}),r_{2}},
\end{eqnarray*}
thus $K_{2}=\sqrt{2}$.

Let $r_{s}$ and $t_{s}$ be given by $$\frac{1}{r_{s}}=\frac{1-s}{r_{1}}+\frac{s}{r_{2}}, \frac{1}{t_{s}}=\frac{1-s}{t_{1}}+\frac{s}{t_{2}}$$
then we have, $r_{s}=\frac{2}{2-s}, t_{s}=\frac{2}{s}$.

By the  Theorem 3.1 it follows that
$$\|FT\|_{(\phi_{s}),t_{s}}\leq 2^{\frac{s}{s}}\|T\|_{(\varphi_{s}),r_{s}},$$
since $K_{1}^{1-s}K_{2}^{s}=2^{\frac{s}{2}}$.

Hence, we have
$$\|T\|_{(\phi_{s}),r_{s}}=\left[\|T_{1}\|_{(\varphi_{s})}^{\frac{2}{2-s}}+\|T_{2}\|_{(\varphi_{s})}^{\frac{2}{2-s}}\right]^{\frac{2-s}{2}}$$
and
$$\|FT\|_{(\phi_{s}),t_{s}}=\left(\|T_{1}+T_{2}\|_{(\varphi_{s})}^{\frac{2}{s}}+\|T_{1}-T_{2}\|_{(\varphi_{s})}^{\frac{2}{s}}\right)^{\frac{s}{2}}.$$
\end{proof}
The following corollary is Clarkson inequality of noncommutative $S_{p}$ space and the proof is similar with the P42 of \cite{Rao}.
\begin{corollary}
Suppose that $1<p<\infty$ and $q=\frac{p}{p-1}$. Then for $T_{1},T_{2}\in S_{p}$, we have
$$\left(\|T_{1}+T_{2}\|_{p}^{q}+\|T_{1}-T_{2}\|_{p}^{q}\right)^{\frac{1}{q}}\leq2^{\frac{1}{q}}\left(\|T_{1}\|_{p}^{p}+\|T_{2}\|_{p}^{p}\right)^{\frac{1}{p}}, 1<p\leq2,$$
and
$$\left(\|T_{1}+T_{2}\|_{p}^{p}+\|T_{1}-T_{2}\|_{p}^{p}\right)^{\frac{1}{p}}\leq2^{\frac{1}{p}}\left(\|T_{1}\|_{p}^{q}+\|T_{2}\|_{p}^{q}\right)^{\frac{1}{q}}, 2\leq p\leq \infty.$$
\end{corollary}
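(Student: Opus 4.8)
The plan is to read off both inequalities from the results already established, treating the ranges $1<p\le 2$ and $2\le p\le\infty$ separately, in the spirit of the treatment on p.~42 of \cite{Rao}.

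For $1<p\le 2$ I would specialize Theorem~3.2 to the pair $\varphi(u)=u$ (so that $S_{\varphi}=S_{1}$ and $\|\cdot\|_{(\varphi)}=\|\cdot\|_{1}$) and $\varphi_{0}(u)=u^{2}$, with interpolation parameter $s=2-\tfrac{2}{p}=\tfrac{2}{q}\in(0,1]$. Then $\varphi_{s}^{-1}(u)=[\varphi^{-1}(u)]^{1-s}[\varphi_{0}^{-1}(u)]^{s}=u^{1-s}u^{s/2}=u^{1-s/2}$, so $\varphi_{s}(u)=u^{2/(2-s)}=u^{p}$ and $\|\cdot\|_{(\varphi_{s})}=\|\cdot\|_{p}$. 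Since $\tfrac{2}{s}=q$, $\tfrac{2}{2-s}=p$ and $2^{s/2}=2^{1/q}$, the conclusion of Theorem~3.2 becomes exactly
$$\left(\|T_{1}+T_{2}\|_{p}^{q}+\|T_{1}-T_{2}\|_{p}^{q}\right)^{\frac{1}{q}}\le 2^{\frac{1}{q}}\left(\|T_{1}\|_{p}^{p}+\|T_{2}\|_{p}^{p}\right)^{\frac{1}{p}},$$
which is the first claim. The only point needing a remark is that $\varphi(u)=u$ is a limiting Orlicz function; but $(S_{1},\|\cdot\|_{1})$ is a Banach space, and the two endpoint bounds used in the proof of Theorem~3.2 — the triangle inequality, giving $K_{1}=1$, and the parallelogram identity in $S_{2}$, giving $K_{2}=\sqrt{2}$ — hold verbatim, so the three-lines argument still applies.

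For $2\le p<\infty$ I would pass to the adjoint. Put $q=\tfrac{p}{p-1}\in(1,2]$, and note that the conjugate exponent of $q$ is $p$. The operator $F(T_{1},T_{2})=(T_{1}+T_{2},T_{1}-T_{2})$ on $B(\mathcal{H})\oplus B(\mathcal{H})$ is self-adjoint for the trace pairing $\langle T,B\rangle=\upsilon(TB)={\rm tr}(T_{1}B_{1})+{\rm tr}(T_{2}B_{2})$, because $\upsilon(F(T)B)={\rm tr}(T_{1}(B_{1}+B_{2}))+{\rm tr}(T_{2}(B_{1}-B_{2}))=\upsilon(TF(B))$. By the case just proved, used with $q$ in place of $p$, $F$ maps $\bigoplus\limits_{j=1}^{2}S_{q,q}$ into $\bigoplus\limits_{j=1}^{2}S_{q,p}$ with norm at most $2^{1/p}$. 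Using the isometric dualities $S_{q}^{*}=S_{p}$ and $(\ell^{a})^{*}=\ell^{a'}$, both realized by $\upsilon$, one has $(\bigoplus\limits_{j=1}^{2}S_{q,p})^{*}=\bigoplus\limits_{j=1}^{2}S_{p,q}$ and $(\bigoplus\limits_{j=1}^{2}S_{q,q})^{*}=\bigoplus\limits_{j=1}^{2}S_{p,p}$ isometrically. Taking adjoints and using $F^{*}=F$, we get that $F$ maps $\bigoplus\limits_{j=1}^{2}S_{p,q}$ into $\bigoplus\limits_{j=1}^{2}S_{p,p}$ with the same bound $2^{1/p}$, i.e.
$$\left(\|T_{1}+T_{2}\|_{p}^{p}+\|T_{1}-T_{2}\|_{p}^{p}\right)^{\frac{1}{p}}\le 2^{\frac{1}{p}}\left(\|T_{1}\|_{p}^{q}+\|T_{2}\|_{p}^{q}\right)^{\frac{1}{q}},$$
which is the second claim; the case $p=\infty$ is the trivial bound $\max\{\|T_{1}+T_{2}\|_{\infty},\|T_{1}-T_{2}\|_{\infty}\}\le\|T_{1}\|_{\infty}+\|T_{2}\|_{\infty}$.

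The exponent bookkeeping in the first case ($2^{s/2}=2^{1/q}$, $\tfrac{2}{s}=q$, $\tfrac{2}{2-s}=p$) is immediate from $\tfrac1p+\tfrac1q=1$. The one step that deserves genuine care is the duality used when $p\ge 2$: one must pin down the dual of each outer $\ell$-sum of Schatten classes exactly, so that the interpolation constant remains precisely $2^{1/p}$ — this is where using the \emph{isometric} Schatten/$\ell^{a}$ dualities, rather than the Orlicz-norm duality of Theorem~2.3, is essential — and one must check that the transpose of $F$ under the trace pairing is again $F$. Granting this, the corollary follows.
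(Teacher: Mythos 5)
For the range $1<p\le 2$ your argument has a real gap as written: you apply Theorem~3.2 with $\varphi(u)=u$, but the identity is not an N-function, so you are outside the hypotheses of Definition~3.1 and Theorems~3.1--3.2. The trouble is not the endpoint bounds $K_{1}=1$, $K_{2}=\sqrt{2}$ (those do hold), but the machinery behind the interpolation theorem: the complementary function of $u\mapsto u$ is the degenerate function ($0$ on $[0,1]$, $+\infty$ beyond), so the Orlicz-norm duality of Theorem~2.3 and the construction of $B(z)=V_{k}\psi_{sk}\bigl[(\psi_{1k}^{-1})^{1-z}(\psi_{2k}^{-1})^{z}\bigr](|B_{k}|)$ in the proof of Theorem~3.1 do not apply verbatim at this endpoint. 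Saying ``the three-lines argument still applies'' is precisely the point that would have to be re-proved (e.g.\ by redoing the argument with the $S_{1}$--$B(\mathcal{H})$ trace duality). The paper sidesteps this entirely: it takes $\varphi(u)=|u|^{\alpha}$ with $1<\alpha<p$ and $s=\frac{2(p-\alpha)}{p(2-\alpha)}$, observes that $\varphi_{s}(u)=|u|^{p}$ for \emph{every} such $\alpha$, so Theorem~3.2 gives the inequality with exponents $\frac{2}{s}$, $\frac{2}{2-s}$ and constant $2^{s/2}$, and then lets $\alpha\downarrow 1$ so that $\frac{2}{s}\to q$, $\frac{2-s}{2}\to\frac{1}{p}$, $2^{s/2}\to 2^{1/q}$; no endpoint Orlicz function is ever needed. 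Your first case is repaired exactly by this limiting device, so the gap is fixable, but it is a gap.

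For $2\le p<\infty$ your route is genuinely different from the paper's and is correct modulo the first case: the paper simply repeats the interpolation with $\varphi(u)=|u|^{\beta}$, $p<\beta<\infty$, $s=\frac{2(\beta-p)}{p(\beta-2)}$, and lets $\beta\uparrow\infty$ (here all the $\varphi$'s are honest N-functions, so no endpoint issue arises), whereas you dualize the first inequality, using $F^{*}=F$ under the pairing $\upsilon(TB)$ and the isometric identifications $\bigl(\bigoplus_{j=1}^{2}S_{q,a}\bigr)^{*}=\bigoplus_{j=1}^{2}S_{p,a'}$. The adjoint computation and the exponent bookkeeping are right, and the duality argument is shorter and explains why the two Clarkson inequalities are dual to each other; the cost is that it leans on the exact Schatten and $\ell^{a}$-sum dualities, which the paper does not develop (its Theorem~2.3 is an Orlicz-norm duality, not the isometric statement $S_{q}^{*}=S_{p}$ you need), and it inherits whatever is required to establish the $1<p\le 2$ case. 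With the first case proved as in the paper, your second-case argument stands as a legitimate alternative; your handling of $p=\infty$ as the trivial triangle-inequality bound is fine.
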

\begin{proof}
If $1<p\leq2$, then let $1<\alpha<p\leq2$ and $\varphi(u)=|u|^{\alpha}, \varphi_{0}(u)=|u|^{2}, s=\frac{2(p-\alpha)}{p(2-\alpha)}$. Thus $0<s\leq1$ and $\varphi_{s}^{-1}(u)=|u|^{\frac{1}{p}}$ or $\varphi_{s}(u)=|u|^{p}$. Hence $\|\cdot\|_{(\varphi_{s})}=\|\cdot\|_{(p)}$ and since $\lim\limits_{\alpha\downarrow1}\frac{2}{s}=\frac{p}{p-1}=q$; $\lim\limits_{\alpha\downarrow1}\frac{2-s}{2}=\frac{1}{p}$ by the Theorem 3.2 one obtains the first inequality.

Similarly let $2\leq p<\beta<\infty$  and $\varphi(u)=|u|^{\beta}, \varphi_{0}(u)=|u|^{2}, s=\frac{2(\beta-p)}{p(\beta-2)}$. Then $0\leq s\leq1$ and $\varphi_{s}(u)=|u|^{p},$ $\lim\limits_{\beta\uparrow\infty}\frac{2}{s}=p$; $\lim\limits_{\beta\uparrow\infty}\frac{2-s}{2}=\frac{1}{q}$. By the Theorem 3.2 again  one gets the second inequality.
\end{proof}
\section{The von Neumann-Jordan constant}
Based on the classical result of Jordan and von Neumann, we will calculate the von Neumann-Jordan constant of $S_{\varphi}$ and $S_{p}$.
\begin{definition}\cite{Clarkson}
The von Neumann-Jordan constant, $c_{NJ}(X)$ of a Banach space $X$ is the smallest $c>0$ such that for all $x,y\in X-\{0\}$,
$$\frac{1}{c}\leq F(x,y)=\frac{\|x+y\|^{2}+\|x-y\|^{2}}{2(\|x\|^{2}+\|y\|^{2})}\leq c.$$
\end{definition}
An equivalent definition of the von Neumann-Jordan constant is
$$c_{NJ}(X)=\sup\left\{\frac{\|x+y\|^{2}+\|x-y\|^{2}}{2(\|x\|^{2}+\|y\|^{2})}: \|x\|=1,\|y\|\leq1\right\}.$$
It is clear that $1\leq c_{NJ}(X)\leq2$, and by the Jordan-von Neumann Theorem, $c_{NJ}(X)=1$ iff $X$ is a Hilbert space.
\begin{definition}
For a Banach space $X$, the parameter $J(X)$ is termed a nonsquare constant, where $$J(X)=\sup\{\min(\|x+y\|,\|x-y\|):\|x\|=\|y\|=1\}.$$
\end{definition}
It is easy to know that $X$ is uniformly nonsquare iff $J(X)<2$. M.Kato and Y. Takahashi gave the following useful result.
\begin{lemma}\cite{Kato}
For any Banach space $X$, one has
$$J(X)^{2}\leq2c_{NJ}(X).$$
\end{lemma}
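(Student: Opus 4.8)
The plan is to derive the inequality directly from the two definitions, using nothing about the Banach space $X$ beyond the norm axioms; the only analytic input is the elementary fact that the square of a minimum of two nonnegative numbers is dominated by their arithmetic mean.

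First I would fix an arbitrary pair $x,y\in X$ with $\|x\|=\|y\|=1$. The key elementary observation is that for nonnegative reals $a,b$ one has $\min(a,b)^{2}=\min(a^{2},b^{2})\leq\frac{a^{2}+b^{2}}{2}$. Applying this with $a=\|x+y\|$ and $b=\|x-y\|$ gives
$$\min\bigl(\|x+y\|,\|x-y\|\bigr)^{2}\leq\frac{\|x+y\|^{2}+\|x-y\|^{2}}{2}.$$
Next I would invoke the normalization $\|x\|^{2}+\|y\|^{2}=2$ to rewrite the right-hand side as $2\cdot\frac{\|x+y\|^{2}+\|x-y\|^{2}}{2(\|x\|^{2}+\|y\|^{2})}=2F(x,y)$. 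Since $x,y\neq 0$, the quantity $F(x,y)$ is one of those over which $c_{NJ}(X)$ is the supremum, so $F(x,y)\leq c_{NJ}(X)$. Combining, $\min(\|x+y\|,\|x-y\|)^{2}\leq 2c_{NJ}(X)$ for every unit pair $x,y$; taking the supremum over all such pairs yields $J(X)^{2}\leq 2c_{NJ}(X)$.

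There is essentially no serious obstacle here; the only point requiring a moment's care is that the inequality directions line up correctly — namely that the pairs $(x,y)$ with $\|x\|=\|y\|=1$ form a subfamily of the pairs entering the definition of $c_{NJ}(X)$, so that each term $F(x,y)$ is already bounded by $c_{NJ}(X)$ before one passes to the supremum defining $J(X)$. It is also worth noting that the factor $2$ (rather than $1$) is intrinsic to this argument, since $\min$ is only comparable to an arithmetic mean after squaring.
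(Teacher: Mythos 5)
Your argument is correct: for unit vectors the normalization $\|x\|^{2}+\|y\|^{2}=2$ turns $\tfrac{1}{2}\left(\|x+y\|^{2}+\|x-y\|^{2}\right)$ into $2F(x,y)\leq 2c_{NJ}(X)$, and the elementary bound $\min(a,b)^{2}\leq\tfrac{1}{2}(a^{2}+b^{2})$ then dominates every competitor in the supremum defining $J(X)$. The paper states this lemma only as a cited result of Kato and Takahashi without reproducing a proof, and your derivation is exactly the standard elementary argument behind it, so nothing further is needed.
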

Next, we will obtain lower and upper bounds for the nonsquare and von Neumann-Jordan constants for $S_{\varphi}$. Thus we must give the following indices for $\varphi$.
\begin{definition}
For Orlicz function $\varphi$, we define :
$$\alpha_{\varphi}=\lim\limits_{u\rightarrow0}\inf \frac{\varphi^{-1}(u)}{\varphi^{-1}(2u)},\beta_{\varphi}=\lim\limits_{u\rightarrow0}\sup \frac{\varphi^{-1}(u)}{\varphi^{-1}(2u)}.$$
\end{definition}
\begin{theorem}
If $\varphi\in\delta_{2}(0)$, then it follows that
$$J(S_{\varphi})\geq \max\{(\alpha_{\varphi})^{-1},2\beta_{\varphi}\}.$$
\end{theorem}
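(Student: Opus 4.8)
The strategy is to exhibit, for each of the two quantities $(\alpha_\varphi)^{-1}$ and $2\beta_\varphi$, a pair of norm-one operators in $S_\varphi$ whose sum and difference both have norm arbitrarily close to the target value; since $J(S_\varphi)$ is a supremum of $\min(\|x+y\|_{(\varphi)},\|x-y\|_{(\varphi)})$ over unit vectors, each such construction gives a lower bound, and taking the larger yields the claim. The natural place to look is at diagonal operators supported on two orthogonal one-dimensional projections, because on such operators the Luxemburg norm reduces to the classical Orlicz sequence-space norm on $\ell_\varphi$ for sequences with two nonzero coordinates, and the indices $\alpha_\varphi,\beta_\varphi$ are precisely designed to control $\|\cdot\|_{(\varphi)}$ under the doubling $u\mapsto 2u$ inside $\varphi^{-1}$.

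First I would fix mutually orthogonal rank-one projections $e_1,e_2$ on $\mathcal H$ and, given a small $\varepsilon>0$, choose $u>0$ small enough that $\varphi^{-1}(u)/\varphi^{-1}(2u)$ is within $\varepsilon$ of $\beta_\varphi$ (for the second bound) or of $\alpha_\varphi$ (for the first). Set $a=\varphi^{-1}(2u)$ and consider $T = a\,e_1$, $B = a\,e_2$. Then $\mathrm{tr}\,\varphi(T/a) = \varphi(\varphi^{-1}(2u)/a)\cdot 1$; more carefully one normalizes so that $\|T\|_{(\varphi)}=\|B\|_{(\varphi)}=1$, which forces $a=\varphi^{-1}(1)$ — so instead the correct construction is $T=\varphi^{-1}(1)\,e_1$, $B=\varphi^{-1}(1)\,e_2$, for which $T\pm B$ is diagonal with both singular values equal to $\varphi^{-1}(1)$, giving $\mathrm{tr}\,\varphi((T\pm B)/\lambda)=2\varphi(\varphi^{-1}(1)/\lambda)$; solving $2\varphi(\varphi^{-1}(1)/\lambda)=1$, i.e. $\varphi(\varphi^{-1}(1)/\lambda)=\tfrac12$, gives $\lambda = \varphi^{-1}(1)/\varphi^{-1}(1/2)$. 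Thus $\|T+B\|_{(\varphi)}=\|T-B\|_{(\varphi)}=\varphi^{-1}(1)/\varphi^{-1}(1/2)$, and replacing $1$ by a general small $u$ (rescaling $T,B$ so their norms stay $1$, using homogeneity of the problem along the ``level $u$'' instead of level $1$) produces the ratio $\varphi^{-1}(2u)/\varphi^{-1}(u)$ in the limit. Taking $u\to 0$ along a subsequence realizing the $\limsup$ gives $\min(\|T+B\|,\|T-B\|)\to (\alpha_\varphi)^{-1}$ after reindexing, hence $J(S_\varphi)\ge(\alpha_\varphi)^{-1}$.

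For the bound $J(S_\varphi)\ge 2\beta_\varphi$ I would instead use a ``parallel'' pair rather than an ``orthogonal'' one: take $T=B=c\,e_1$ with $c$ chosen so $\|T\|_{(\varphi)}=\|B\|_{(\varphi)}=1$, i.e. $c=\varphi^{-1}(1)$ at level $1$ (or $c$ matched to level $u$ in the limiting argument). Then $T-B=0$, which is useless — so the right choice is $T=c\,e_1$, $B=c\,e_2$ again but now tracking $\|T+B\|$ via a different normalization: $T+B$ has two equal singular values $c$, and one compares $\mathrm{tr}\,\varphi((T+B)/\lambda)=2\varphi(c/\lambda)=1$ against the constraint $\varphi(c)=1$ (unit norm of each summand at a single coordinate). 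Writing $c=\varphi^{-1}(v)$ where $v$ is chosen so that the single-coordinate norm is $1$ forces $v=1$; then $\lambda=\varphi^{-1}(1)/\varphi^{-1}(1/2)$ as before. To get $2\beta_\varphi$ one works at level $u$: let each summand be $\varphi^{-1}(u)e_i$ normalized so its norm is $1$ — which it is not unless $u=1$ — so the cleaner route is to observe $\|\varphi^{-1}(u)(e_1+e_2)\|_{(\varphi)}$ equals $\varphi^{-1}(u)/\varphi^{-1}(u/2)$ while $\|\varphi^{-1}(u)e_i\|_{(\varphi)}=\varphi^{-1}(u)/\varphi^{-1}(u)=1$... the bookkeeping here is exactly where care is needed, and the $\delta_2(0)$ hypothesis guarantees all the relevant norms are finite and the infimum defining the Luxemburg norm is attained at an interior $\lambda$ so these identities are exact.

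The main obstacle, and the step I would spend the most effort on, is precisely this normalization bookkeeping: one must choose the two operators so that \emph{both} have Luxemburg norm exactly $1$ while simultaneously arranging that $\|T+B\|_{(\varphi)}$ and $\|T-B\|_{(\varphi)}$ are computed at a parameter level $u\to 0$ where the ratio $\varphi^{-1}(u)/\varphi^{-1}(2u)$ approaches $\alpha_\varphi$ (resp. $\beta_\varphi$). The clean way to do this is to scale the coordinate values by a free parameter and express everything through $\varphi^{-1}$ evaluated at $u$ and at $u/2$ or $2u$, then pass to the limit along subsequences realizing the $\liminf$ and $\limsup$ in Definition~4.4; the $\delta_2(0)$ condition is used to ensure that $\rho_\varphi$ is finite and continuous in the relevant range so that each Luxemburg norm is the unique solution of $\mathrm{tr}\,\varphi(\cdot/\lambda)=1$, making all the displayed equalities rigorous rather than merely asymptotic. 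Once the two constructions are in place, $J(S_\varphi)\ge\max\{(\alpha_\varphi)^{-1},2\beta_\varphi\}$ follows immediately from the definition of $J(X)$ as a supremum.
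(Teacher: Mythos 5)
There is a genuine gap, and it is exactly at the point you flag as ``normalization bookkeeping'': a two-atom construction cannot reach the indices $\alpha_\varphi,\beta_\varphi$ at all. If $T=\varphi^{-1}(1)e_1$, $B=\varphi^{-1}(1)e_2$, then indeed $\|T\pm B\|_{(\varphi)}=\varphi^{-1}(1)/\varphi^{-1}(1/2)$, but this quantity involves $\varphi^{-1}$ only at the levels $1$ and $1/2$; there is no homogeneity of the Luxemburg norm that lets you ``work at level $u$'' instead, because the norm is pinned by the condition $\mathrm{tr}\,\varphi(\cdot/\lambda)\leq 1$, so with finitely many fixed atoms the relevant modular values never tend to $0$. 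Since $\alpha_\varphi$ and $\beta_\varphi$ are defined through $\varphi^{-1}(u)/\varphi^{-1}(2u)$ as $u\to 0$, your limit claim ``produces the ratio $\varphi^{-1}(2u)/\varphi^{-1}(u)$ in the limit'' has no construction behind it. The paper resolves this with a \emph{many-small-atoms} construction: given small $u=\mathrm{tr}(T_0)$ realizing the ratio within $\varepsilon$ of $\alpha_\varphi$, it takes $n_0\approx 1/(2u)$ diagonal copies of $\varphi^{-1}(2u)$ together with a filler operator $C$ chosen so that the modular is exactly $1$; then $T_1$ and $T_2$ are supported on disjoint diagonal blocks, both have Luxemburg norm exactly $1$, and $T_1\pm T_2$ has the same singular values, whose modular at the scale $(1-\varepsilon)/(\alpha_\varphi+\varepsilon)$ exceeds $1$. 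The number of atoms necessarily grows like $1/u$ as $u\to 0$ -- that is the mechanism your proposal is missing.

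The second bound is also structurally different from anything in your sketch. To get $2\beta_\varphi$ (note the factor $2$), the paper does \emph{not} use orthogonal supports: it takes $T_1$ with $2n_0$ atoms of value $\varphi^{-1}(u)$ and $T_2$ with $n_0$ atoms $+\varphi^{-1}(u)$ and $n_0$ atoms $-\varphi^{-1}(u)$ on the \emph{same} coordinates (plus fillers), so that each of $T_1+T_2$ and $T_1-T_2$ contains $n_0$ singular values equal to $2\varphi^{-1}(u)$; comparing $2\varphi^{-1}(u)$ with $\varphi^{-1}(2u)$ via the definition of $\beta_\varphi$ is what produces the lower bound $(1-\varepsilon)(2\beta_\varphi-\varepsilon)$. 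Your proposal for this part ends unresolved (the ``parallel pair'' gives $T-B=0$, and the fallback returns to the two-orthogonal-atom computation, which again only sees $\varphi^{-1}(1)/\varphi^{-1}(1/2)$). So both halves need the sign-and-multiplicity arrangement over $\sim 1/u$ coordinates with an exact-modular filler; the $\delta_2(0)$ remark in your last paragraph does not substitute for it.
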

\begin{proof}
One side, by definition of $\alpha_{\varphi}$, for given $0<\varepsilon<1$, we can find a $T_{0}\in S_{\varphi}$ such that $0<{\rm tr}(T_{0})<\frac{\varepsilon}{2}$ and $\frac{\varphi^{-1}({\rm tr}(T_{0}))}{\varphi^{-1}({\rm tr}(2T_{0}))}<\alpha_{\varphi}+\varepsilon$, which means $${\rm tr}(T_{0})<\varphi((\alpha_{\varphi}+\varepsilon)\varphi^{-1}({\rm tr}(2T_{0}))).$$
Let $n_{0}>0$ be the smallest integer such that $n_{0}\leq \frac{1}{{\rm tr}(2T_{0})}<n_{0}+1$, and choose $C\in S_{\varphi}$ such that $2n_{0}{\rm tr}(T_{0})+{\rm tr}(\varphi(C))=1$.

Define
$$T_{1}={\rm diag}(\underbrace {\varphi^{-1}(2T_{0}),\cdots,\varphi^{-1}(2T_{0})}_{n_{0}},C,0,\cdots)$$ and
$$T_{2}={\rm diag}( \underbrace{0,\cdots,0}_{n_{0}+1},\underbrace{\varphi^{-1}(2T_{0}),\cdots,\varphi^{-1}(2T_{0})}_{n_{0}},C,0,\cdots),$$ then $T_{1},T_{2}\in S_{\varphi}$ and ${\rm tr}(\varphi(T_{1}))={\rm tr}(\varphi(T_{2}))=2n_{0}{\rm tr}(T_{0})+{\rm tr}\varphi(C)=1$, hence $\|T_{1}\|_{(\varphi)}=\|T_{2}\|_{(\varphi)}=1$ and we get
\begin{eqnarray*}
{\rm tr}\left(\varphi\left(\frac{(\alpha_{\varphi}+\varepsilon)(T_{1}-T_{2})}{1-\varepsilon}\right)\right)&=&{\rm tr}\left(\varphi\left(\frac{(\alpha_{\varphi}+\varepsilon)(T_{1}+T_{2})}{1-\varepsilon}\right)\right)\\
&>&\frac{1}{1-\varepsilon}{\rm tr}\left(\varphi\left((\alpha_{\varphi}+\varepsilon)(T_{1}+T_{2})\right)\right)\\
&=&\frac{1}{1-\varepsilon}\varphi\left((\alpha_{\varphi}+\varepsilon)\cdot{\rm tr}(T_{1}+T_{2})\right)\\
&=&\frac{1}{1-\varepsilon}\varphi\left((\alpha_{\varphi}+\varepsilon)\cdot(2n_{0}{\rm tr}(\varphi^{-1}(2T_{0}))+2{\rm tr}(\varphi(C))\right)\\
&>&\frac{1}{1-\varepsilon}\varphi\left((\alpha_{\varphi}+\varepsilon)\cdot2n_{0}{\rm tr}(\varphi^{-1}(2T_{0}))\right)\\
&>&\frac{{\rm tr}(T_{0})}{1-\varepsilon}\\
&>&\frac{1-2{\rm tr}(T_{0})}{1-\varepsilon}\\
&>&1.
\end{eqnarray*}
Hence $$J(S_{\varphi})\geq \min\{\|T_{1}+T_{2}\|_{(\varphi)},\|T_{1}-T_{2}\|_{(\varphi)}\}>\frac{1-\varepsilon}{\alpha_{\varphi}+\varepsilon},$$
which implies that $J(S_{\varphi})\geq \alpha_{\varphi}^{-1}$ by arbitrariness of $\varepsilon$.

On the other side, by definition of $\beta_{\varphi}$, for a given $0<\varepsilon<1$, we can find a $B_{0}\in S_{\varphi}$ such that $0<{\rm tr}(B_{0})<\frac{\varepsilon}{2}$ and $\frac{\varphi^{-1}({\rm tr}(B_{0}))}{\varphi^{-1}({\rm tr}(2B_{0}))}>\beta_{\varphi}-\frac{\varepsilon}{2}$, which means $${\rm tr}(B_{0})<\varphi\left(\frac{2\varphi^{-1}({\rm tr}(B_{0}))}{2\beta_{\varphi}-\varepsilon)}\right).$$
Again let $n_{0}>0$ be the smallest integer such that $n_{0}\leq \frac{1}{{\rm tr}(2B_{0})}<n_{0}+1$, and choose $D\in S_{\varphi}$ such that $2n_{0}{\rm tr}(B_{0})+{\rm tr}(\varphi(D))=1$.

Define
$$T_{1}={\rm diag}(\underbrace {\varphi^{-1}(B_{0}),\cdots,\varphi^{-1}(B_{0})}_{2n_{0}},C,0,\cdots)$$ and
$$T_{2}={\rm diag}( \underbrace{\varphi^{-1}(B_{0}),\cdots,\varphi^{-1}(B_{0})}_{n_{0}},\underbrace{-\varphi^{-1}(B_{0}),\cdots,-\varphi^{-1}(B_{0})}_{n_{0}},0,D,0,\cdots),$$ then $T_{1},T_{2}\in S_{\varphi}$ and ${\rm tr}(\varphi(T_{1}))={\rm tr}(\varphi(T_{2}))=2n_{0}{\rm tr}(B_{0})+{\rm tr}\varphi(D)=1$, hence $\|T_{1}\|_{(\varphi)}=\|T_{2}\|_{(\varphi)}=1$ and we get
\begin{eqnarray*}
{\rm tr}\left(\varphi\left(\frac{T_{1}-T_{2}}{(1-\varepsilon)(2\beta_{\varphi}-\varepsilon)}\right)\right)&=&{\rm tr}\left(\varphi\left(\frac{T_{1}+T_{2}}{(1-\varepsilon)(2\beta_{\varphi}-\varepsilon)}\right)\right)\\
&>&\frac{2n_{0}{\rm tr}(B_{0})}{1-\varepsilon}\\
&>&1.
\end{eqnarray*}
Hence $$J(S_{\varphi})\geq \min\{\|T_{1}+T_{2}\|_{(\varphi)},\|T_{1}-T_{2}\|_{(\varphi)}\}>(1-\varepsilon)(2\beta_{\varphi}-\varepsilon),$$
which implies that $J(S_{\varphi})\geq 2\beta_{\varphi}$ by arbitrariness of $\varepsilon$.
\end{proof}
\begin{theorem}
Let $\varphi$ be an N-function and $\varphi_{s}$ be the inverse which satisfies that $\varphi_{s}^{-1}(u)=\left[\varphi^{-1}(u)\right]^{1-s}\left[\varphi_{0}^{-1}(u)\right]^{s}
=\left[\varphi^{-1}(u)\right]^{1-s}u^{\frac{s}{2}}$, where $0< s\leq1$ and $\varphi_{0}(u)=u^{2}$, then
$$c_{NJ}(S_{\varphi_{s}})\leq 2^{1-s}.$$
\end{theorem}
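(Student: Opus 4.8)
The plan is to obtain this estimate as a one-line consequence of Theorem 3.2, which is already a Clarkson-type inequality on $S_{\varphi_s}$ for exactly the intermediate function appearing here (built from $\varphi_1=(\varphi,\varphi)$, $\varphi_2=(\varphi_0,\varphi_0)$ with $\varphi_0(u)=u^2$). First I would fix a nonzero pair $x,y\in S_{\varphi_s}$, apply Theorem 3.2 to $T=(x,y)$, and abbreviate $\|\cdot\|:=\|\cdot\|_{(\varphi_s)}$, which yields
$$\left(\|x+y\|^{2/s}+\|x-y\|^{2/s}\right)^{s/2}\le 2^{s/2}\left(\|x\|^{2/(2-s)}+\|y\|^{2/(2-s)}\right)^{(2-s)/2}.$$
The only remaining ingredient is to replace both the $2/s$-sum and the $2/(2-s)$-sum by the plain Euclidean combination $\|\cdot\|^2+\|\cdot\|^2$, which is pure scalar bookkeeping via the monotonicity in $r$ of the power means $M_r(a,b)=\left((a^r+b^r)/2\right)^{1/r}$.

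Concretely, since $0<s\le 1$ one has $2\le 2/s$, so $M_2\le M_{2/s}$ evaluated at $(a,b)=(\|x+y\|,\|x-y\|)$ gives $\left(\|x+y\|^2+\|x-y\|^2\right)^{1/2}\le 2^{(1-s)/2}\left(\|x+y\|^{2/s}+\|x-y\|^{2/s}\right)^{s/2}$; and $1\le 2-s$ gives $2/(2-s)\le 2$, so $M_{2/(2-s)}\le M_2$ evaluated at $(a,b)=(\|x\|,\|y\|)$ gives $\left(\|x\|^{2/(2-s)}+\|y\|^{2/(2-s)}\right)^{(2-s)/2}\le 2^{(1-s)/2}\left(\|x\|^2+\|y\|^2\right)^{1/2}$. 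Chaining these two bounds with the displayed inequality, the constants multiply to $2^{(1-s)/2}\cdot 2^{s/2}\cdot 2^{(1-s)/2}=2^{1-s/2}$, so after squaring $\|x+y\|^2+\|x-y\|^2\le 2^{2-s}\left(\|x\|^2+\|y\|^2\right)$. Dividing by $2(\|x\|^2+\|y\|^2)$ shows $F(x,y)\le 2^{1-s}$ for every nonzero pair $x,y$, whence $c_{NJ}(S_{\varphi_s})\le 2^{1-s}$ by the equivalent supremum form of $c_{NJ}$ recorded after Definition 4.1.

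I do not expect a genuine obstacle here: essentially all the content sits in Theorem 3.2. The two points that deserve a sentence of care are (i) checking that the $\varphi_s$ of the statement really is the intermediate N-function to which Theorem 3.2 applies — which holds by construction with $\varphi_0(u)=u^2$, and which is consistent at $s=1$, giving $c_{NJ}(S_2)=1$ — and (ii) making sure the pair $(x,y)$ may legitimately be fed into Theorem 3.2, i.e. that it lies in the relevant $\bigoplus_{j=1}^{2}S_{\varphi_{ij},r_i}$; if necessary one first proves the inequality for $x,y$ that are finite combinations of mutually orthogonal projections, exactly as in the proof of Theorem 3.2, and then passes to the general case by density and norm continuity.
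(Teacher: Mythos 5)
Your proposal is correct and follows essentially the same route as the paper: both deduce the bound from the Clarkson-type inequality of Theorem 3.2 and then pass from the $\tfrac{2}{s}$- and $\tfrac{2}{2-s}$-sums to the Euclidean sums by an elementary scalar estimate (you phrase it as power-mean monotonicity, the paper as H\"older's inequality for two-term sequences, which is the same fact), arriving at $\|x+y\|_{(\varphi_{s})}^{2}+\|x-y\|_{(\varphi_{s})}^{2}\leq 2^{2-s}\bigl(\|x\|_{(\varphi_{s})}^{2}+\|y\|_{(\varphi_{s})}^{2}\bigr)$ and hence $c_{NJ}(S_{\varphi_{s}})\leq 2^{1-s}$. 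Your bookkeeping of the constants matches the paper's, so no further changes are needed.
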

\begin{proof}
By Theorem 3.2, for any $T_{1},T_{2}\in S_{\varphi_{s}}$ we have $$\left(\|T_{1}+T_{2}\|_{(\varphi_{s})}^{\frac{2}{s}}+\|T_{1}-T_{2}\|_{(\varphi_{s})}^{\frac{2}{s}}\right)^{\frac{s}{2}}
\leq 2^{\frac{s}{2}}\left(\|T_{1}\|_{(\varphi_{s})}^{\frac{2}{2-s}}+\|T_{2}\|_{(\varphi_{s})}^{\frac{2}{2-s}}\right)^{\frac{2-s}{2}}.$$
Using the H${\rm \ddot{o}}$lder inequality for sequences and letting $p=\frac{1}{1-s},q=\frac{1}{s}$ one gets,
\begin{eqnarray*}
\|T_{1}+T_{2}\|_{(\varphi_{s})}^{2}+\|T_{1}-T_{2}\|_{(\varphi_{s})}^{2}&\leq&
2^{\frac{1}{p}}\left[\|T_{1}+T_{2}\|_{(\varphi_{s})}^{2q}+\|T_{1}-T_{2}\|_{(\varphi_{s})}^{2q}\right]^{\frac{1}{q}}\\ &=&2^{1-s}\left[\|T_{1}+T_{2}\|_{(\varphi_{s})}^{\frac{2}{s}}+\|T_{1}-T_{2}\|_{(\varphi_{s})}^{\frac{2}{s}}\right]^{s}\\
&\leq&2\left[\|T_{1}\|_{(\varphi_{s})}^{\frac{2}{2-s}}+\|T_{2}\|_{(\varphi_{s})}^{\frac{2}{2-s}}\right]^{2-s}.
\end{eqnarray*}
Similarly, let $p=\frac{2-s}{1-s},q=2-s$, where $0<s\leq1$, then let
\begin{eqnarray*}
\|T_{1}\|_{(\varphi_{s})}^{\frac{2}{2-s}}+\|T_{2}\|_{(\varphi_{s})}^{\frac{2}{2-s}}&\leq&
2^{\frac{1}{p}}\left[\|T_{1}\|_{(\varphi_{s})}^{\frac{2q}{2-s}}+\|T_{2}\|_{(\varphi_{s})}^{\frac{2q}{2-s}}\right]^{\frac{1}{q}}\\ &=&2^{\frac{2-s}{1-s}}\left[\|T_{1}\|_{(\varphi_{s})}^{2}+\|T_{2}\|_{(\varphi_{s})}^{2}\right]^{\frac{1}{2-s}}.
\end{eqnarray*}
Hence, for $T_{1}\neq T_{2}\neq0$, one can get the conclusion.
\end{proof}
Using Lemma 4.1 and Theorem 4.1 as well as  Theorem 4.2, we can deduce the bounds of the von Neumann-Jordan constant for $S_{(\varphi_{s})}$ as follows:
\begin{corollary}
Let $\varphi_{s}, 0<s\leq1$ be as the inverse of $\varphi_{s}^{-1}$ as Theorem 4.1 and $\varphi_{s}\in \delta_{2}(0)$. Then we obtain
$$\max\left\{\frac{1}{2}\alpha_{\varphi_{s}}^{-2},2\beta_{\varphi_{s}}^{2}\right\}\leq c_{NJ}(S_{\varphi_{s}})\leq 2^{1-s}.$$
\end{corollary}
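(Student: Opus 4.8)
The plan is to obtain the two bounds separately and then simply combine them; no new machinery is needed beyond Lemma 4.1, Theorem 4.1 and Theorem 4.2, so the proof is pure bookkeeping of which inequality feeds which.

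For the upper bound, I would observe that the $\varphi_{s}$ in the statement is exactly the intermediate N-function treated in Theorem 4.2, i.e. the inverse of $\varphi_{s}^{-1}(u)=[\varphi^{-1}(u)]^{1-s}u^{s/2}$ with $0<s\le 1$. Hence Theorem 4.2 applies verbatim and yields $c_{NJ}(S_{\varphi_{s}})\le 2^{1-s}$ at once.

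For the lower bound, I would first record that $\varphi_{s}$ is an Orlicz function with $\varphi_{s}\in\delta_{2}(0)$ (the latter being part of the hypothesis), so Theorem 4.1 may be applied with $\varphi=\varphi_{s}$, giving
$$J(S_{\varphi_{s}})\ \ge\ \max\{\alpha_{\varphi_{s}}^{-1},\,2\beta_{\varphi_{s}}\}.$$
Since both sides are nonnegative and $t\mapsto t^{2}$ is increasing on $[0,\infty)$, squaring commutes with the maximum and gives $J(S_{\varphi_{s}})^{2}\ge \max\{\alpha_{\varphi_{s}}^{-2},\,4\beta_{\varphi_{s}}^{2}\}$. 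Now I would invoke Lemma 4.1, namely $J(S_{\varphi_{s}})^{2}\le 2c_{NJ}(S_{\varphi_{s}})$, and divide by $2$ to obtain
$$c_{NJ}(S_{\varphi_{s}})\ \ge\ \tfrac12 J(S_{\varphi_{s}})^{2}\ \ge\ \max\Big\{\tfrac12\alpha_{\varphi_{s}}^{-2},\,2\beta_{\varphi_{s}}^{2}\Big\}.$$
Together with the upper bound this is the claimed two-sided estimate.

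There is essentially no analytic obstacle here; the only points I would spell out are that the Calderón--Lozanovskii-type function $\varphi_{s}$ is genuinely an N-function (so that the indices $\alpha_{\varphi_{s}},\beta_{\varphi_{s}}$ and Theorem 4.1 make sense), which follows from Definition 3.1 together with $\varphi_{s}\in\delta_{2}(0)$, and a consistency check that the lower bound never exceeds the trivial bound $c_{NJ}\le 2$, which holds because $J(X)\le 2$ for every Banach space $X$. I would also remark that at $s=1$ one has $\varphi_{s}(u)=u^{2}$, $S_{\varphi_{s}}=S_{2}$ is a Hilbert space, and both bounds degenerate to $c_{NJ}=1$, in agreement with the Jordan--von Neumann theorem.
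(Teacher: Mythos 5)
Your proposal is correct and is exactly the argument the paper intends: the corollary is stated as an immediate consequence of combining Theorem 4.2 (upper bound $2^{1-s}$) with Theorem 4.1 applied to $\varphi_{s}$ and Lemma 4.1, i.e. squaring $J(S_{\varphi_{s}})\geq\max\{\alpha_{\varphi_{s}}^{-1},2\beta_{\varphi_{s}}\}$ and using $c_{NJ}\geq\frac{1}{2}J^{2}$. Your additional remarks (that $\varphi_{s}$ is a genuine N-function so the indices make sense, and the $s=1$ Hilbert-space consistency check) are sound but not needed beyond what the paper already assumes.
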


\begin{example}
For $p\geq1$, one has
$$c_{NJ}(S_{p})= \max\left\{2^{\frac{2}{p}-1},2^{1-\frac{2}{p}}\right\}.$$
\end{example}
\begin{proof}
On one side, by Theorem 4.2, if $1<p\leq2$, then let $1<\alpha<p\leq2$ and $\varphi(u)=|u|^{\alpha}, \varphi_{0}(u)=|u|^{2}, s=\frac{2(p-\alpha)}{p(2-\alpha)}$. Thus $0<s\leq1$ and $\varphi_{s}(u)=|u|^{p}$. Let $\alpha\rightarrow1$, then $1-s\rightarrow\frac{2}{p}-1$, and $c_{NJ}(S_{p})\leq2^{\frac{2}{p}-1}$. If $2\leq p<\beta<\infty$, and $\varphi(u)=|u|^{\beta}, \varphi_{0}(u)=|u|^{2}, s=\frac{2(\beta-p)}{p(\beta-2)}$, then $0<s\leq1$ and $\varphi_{s}(u)=|u|^{p}$. Let $\beta\rightarrow\infty$, then $1-s\rightarrow1-\frac{2}{p}$, and $c_{NJ}(S_{p})\leq2^{1-\frac{2}{p}}$. Hence,
$$c_{NJ}(S_{p})\leq\max\left\{2^{\frac{2}{p}-1},2^{1-\frac{2}{p}}\right\}.$$
On the other side,  for $\varphi(x)=|x|^{p}$,  $$\alpha_{\varphi}=\beta_{\varphi}=\lim\limits_{u\rightarrow0}\inf \frac{\varphi^{-1}(u)}{\varphi^{-1}(2u)}=2^{-\frac{1}{p}},$$
then $\frac{1}{2}\alpha_{\varphi}^{-2}=2^{\frac{2}{p}-1}$ and $2\beta_{\varphi}^{2}=2^{1-\frac{2}{p}}$ which imply $$c_{NJ}(S_{p})\geq\max\left\{2^{\frac{2}{p}-1},2^{1-\frac{2}{p}}\right\}$$
as desired.
\end{proof}
%%\begin{example}
%%Consider the Orlicz function $\varphi(x)=e^{|x|^{p}}-1, 1<p<\infty$. Then $\varphi^{-1}(x)=[\ln(x+1)]^{\frac{1}{p}}$,
%%\end{example}
\section*{Acknowledgement} We want to express our gratitude to the referee for all his/her careful revision and suggestions which has improved the final version of this work.
\section*{References}

\bibliography{mybibfile}

\end{spacing}
\end{document}